\newcommand{\R}        {\mathbb {R}}
\newcommand{\eps}      {\epsilon}
\newcommand{\lap}      {\bigtriangleup}
\newcommand{\grad}     {\nabla}
\newcommand{\noi}      {\noindent}
\newcommand{\del}      {\partial}
\newcommand{\Prod}{\mathop{\prod}\limits}
\newtheorem{theorem}{Theorem}
\newtheorem{lemma}[theorem]{Lemma}
\newtheorem{remark}[theorem]{Remark}
\newenvironment{proof}[1][Proof]{\noindent\textbf{#1} }{\ \rule{0.5em}{0.5em}}
\newcommand*\lam{\lambda}
\newcommand{\rlemma}[1]{Lemma~\ref{#1}}
\newcommand{\rth}[1]{Theorem~\ref{#1}}
\def\supp{\operatorname{supp}}
\def\div{\operatorname{div}}
\begin{document}

\title{A mixed eigenvalue problem on domains tending to infinity in several directions}
\date{}
\author[1]{Prosenjit Roy\thanks{prosenjit@iitk.ac.in}}
\author[2]{Itai Shafrir\thanks{shafrir@technion.ac.il}}
\affil[1]{Department of Mathematics and Statistics,
  Indian Institute of Technology, Kanpur 208016, INDIA}
\affil[2]{Department of Mathematics, Technion - I.I.T., 32000 Haifa, ISRAEL}

%
%
%
%
%
%
%
%

\maketitle

\smallskip

\begin{abstract}
The aim of this article is to  analyze the asymptotic behaviour of the
eigenvalues of elliptic operators in divergence form with mixed
boundary type conditions for  domains that  become unbounded in
several directions, while they stay  bounded  in some directions
(cylindrical domains).   The limiting behavior of  such eigenvalues is
shown to depend on an ensemble of  eigenvalue problems defined on a domain that is
unbounded only in one direction. The asymptotic behavior of the eigenfunctions are also discussed. This work is a continuation  of the work done in \cite{pr}.
\end{abstract}

\section{Introduction}\label{section:introduction}
Let $m\ge 2, \  p  \geq 1 $ and $\omega_1 \subset \R^m$, $\omega_2
\subset \R^p$ two bounded domains
 with $C^1$ boundary, such that $\omega_1$ contains the origin. By a domain we mean
 a nonempty open connected set. 
 For $\ell>0$ consider the cylindrical domain $\Omega_\ell =  \ell\omega_1 \times \omega_2 \subset \R^{m+p}$.
 A generic point $x\in\Omega_\ell $ is denoted  by $x=(X, \xi)$ where
 $X =(x_1,x')=  (x_1 , x_2, \ldots , x_m ) \in \R^m$ and $\xi =
 (\xi_1,  \  \xi_2, \ldots  ,\xi_p  ) \in \R^p$. We assume that  the
 $(m+p)\times(m+p)$ matrix
 
 \[
A = A(\xi) =
\begin{pmatrix}
A_{11}(\xi) & A_{12}(\xi) \\
A_{12}^T(\xi) & A_{22}(\xi)
\end{pmatrix},
\qquad
\begin{array}{l}
A_{11}(\xi)\in\mathbb{R}^{m\times m},\;
A_{12}(\xi)\in\mathbb{R}^{m\times p},\\[2pt]
A_{22}(\xi)\in\mathbb{R}^{p\times p},
\end{array}
\]
whose elements are measurable and bounded functions on $\omega_2$, is  symmetric and uniformly elliptic, that is, there exists a constant  $c_A> 0$ such that
\begin{equation}
\label{ellip}
(A(\xi) y)\cdot y \geq c_A \|y\|^2,
\text{ for all }   y\in \R^{m+p}\,,\text{ a.e. }\xi\in \omega_2.
\end{equation}
 We also assume that the matrix norms of  the family of matrices $\{A(\xi)\}_{\xi\in\omega_2}$ are~ uniformly bounded, that is
\begin{equation}
\label{mod}
\|A(\xi)\|= \sup_{ x\in \R^{m+p}\setminus \{0\}}\frac{\|A(\xi)x\|}{\|x
  \|} \leq C_A, \text{ a.e. } \xi\in \omega_2.
\end{equation}

Let us write $\partial\Omega_\ell=\Gamma_\ell\cup \gamma_\ell$ where
 \begin{equation}\label{eq:decom}
\Gamma_\ell = \del( \ell\omega_1)\times{}\omega_2  \text{ and }   \gamma_\ell = \ell\omega_1\times{}\partial\omega_2.
\end{equation}
 Denote by $\lambda_\ell^k$   the $k$-th eigenvalue for the mixed
 Neumann-Dirichlet problem
\begin{equation}
\label{eq:5}
\left\{
\begin{aligned}
  - &\textrm{div}(A \nabla u)=\sigma u &\text{ in }\Omega_\ell,\\
     &u=0 &\text{ on }\gamma_\ell,\\
     & (A \nabla u)\cdot\tilde{\nu}=0 &\text{ on }\Gamma_\ell.
\end{aligned}
\right.
\end{equation}
Here  $\tilde{\nu}$ denotes the outward unit normal to $\Gamma_\ell$.
When $k = 1$ we will denote for short 
$\lambda_\ell=\lambda_\ell^1$  and let  $u_\ell$ denote the (unique)
positive  normalized eigenfunction, i.e., $u_\ell>0$ and satisfies $\int_{\Omega_\ell} u_\ell^2
= 1$.  We have the following variational characterization of 
$\lambda_\ell$:
\begin{equation}
  \label{eq:19}
 \lambda_\ell= \inf_{  \left\{ 0\ne u \in H^1(\Omega_\ell)\, | \, u = 0 \text{ on
     }\gamma_\ell,  (A \nabla u)\cdot\tilde{\nu}=0 \text{ on }\Gamma_\ell\right\} }  \frac{\int_{\Omega_\ell}(A \grad u)\cdot \grad u}{\int_{\Omega_\ell}u^2}\,.
\end{equation}

Our goal is to understand the asymptotic behavior of $\lambda_\ell$
when the parameter $\ell$  tends to infinity, which means  that the
domains $\Omega_\ell$ tend to become unbounded  in the  first $m$
directions. For that matter the following eigenvalue problem turns out
to be relevant (as in \cite{pr}). Let $\mu_1$ denote the first
eigenvalue and  let $W$ be the (unique)
positive normalized eigenfunction for the operator
$-\div(A_{22}\nabla u)$, with Dirichlet boundary conditions, on the cross section $\omega_2$ of
$\Omega_\ell$, i.e.,
\begin{equation}
\label{eq:54}
\left\{
\begin{aligned}
  -&\textrm{div}(A_{22} \nabla W)=\mu_1 W \quad\text{ in }\omega_2,\\
     &W=0 \text{ on }\del\omega_2,\\
     & \int_{\omega_2}W^2 =1.
\end{aligned}
\right.
\end{equation}
Here $\div=\div_\xi$ denotes
the divergence operator in the $\xi$ variable. As in \cite{pr}, the problem \eqref{eq:54} will play an important role
in the study of problem \eqref{eq:5}.

In \cite{pr} the authors studied the  problem \eqref{eq:5} for the
case where $m=1$ and $k=1$ in complete generality. It was found that  the
limiting behaviour  of $\lambda_\ell$ is determined by  a minimization
problem set on a semi-infinite cylinder  and in particular   the
presence of Neumann  boundary conditions gives rise to a ``gap
phenomenon" ($\lim_{\ell \rightarrow \infty} \lambda_\ell < \mu_1 $ in
the second part of \rth{crs} below) in the limiting behaviour of $\lambda_\ell$. More precisely  their result reads as follow:

\begin{theorem}\emph{[Chipot-Roy-Shafrir]}
  \label{crs}When $m=1$ we have
  $$\lim_{\ell \rightarrow \infty}
  \lambda_\ell = \min \left\{Z^{+\infty}, Z^{-\infty} \right\},$$
  where
  \begin{equation}
    \label{eq:3}
   Z^{\pm \infty}= \inf_{\{0\ne u\in H^1({\R}_{\pm} \times\omega_2)  |
     u= 0 \text{ on } {\R}_{\pm}\times\del \omega_2 \} }
   \frac{\int_{{\R}_{\pm}\times\omega_2 }(A\grad u)\cdot \grad u}{\int_{{\R}_{\pm}\times\omega_2}u^2}\,.
  \end{equation}
  Furthermore, if $A_{12}\grad_\xi W$ does not equal to $0$ a.e.\,in $\omega_2$ then $\lim_{\ell
    \rightarrow \infty} \lambda_\ell < \mu_1$, while if $A_{12}\grad_\xi W=0$ a.e.\,in $\omega_2$ then $\lambda_\ell =
  \mu_1$ for all $\ell$.
\end{theorem}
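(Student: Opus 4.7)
The plan is to establish $\lim\lambda_\ell=\min\{Z^{+\infty},Z^{-\infty}\}$ by matching upper and lower bounds for the Rayleigh quotient \eqref{eq:19}, and then to resolve the dichotomy by an explicit test function in one case and a direct eigenfunction construction in the other.

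For the upper bound, I would take any $\varphi$ admissible for $Z^{+\infty}$ in \eqref{eq:3} with compact support in $[0,R]\times\omega_2$ and zero-extend it to $\Omega_\ell$ once $[0,R]\subset \ell\omega_1$; the extension is admissible for \eqref{eq:19} (the Dirichlet trace on $\gamma_\ell$ vanishes and the Neumann condition on $\Gamma_\ell$ is trivial since $\varphi$ vanishes near the ends $\partial(\ell\omega_1)$) and has the same Rayleigh quotient as $\varphi$. Taking infima gives $\limsup\lambda_\ell\leq Z^{+\infty}$, and symmetrically $\leq Z^{-\infty}$. For the matching lower bound, select a subsequence with $\lambda_{\ell_n}\to\liminf\lambda_\ell$; the positive normalized first eigenfunctions $u_{\ell_n}$ are uniformly bounded in $H^1$. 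Using the $L^2$-mass distribution $F_n(t)=\int_{\{x_1\leq t\}\cap\Omega_{\ell_n}}u_{\ell_n}^2$, pick $t_n$ with $F_n(t_n)=1/2$, translate $\widetilde u_n(x_1,\xi)=u_{\ell_n}(x_1+t_n,\xi)$, and pass to a weak $H^1_{\mathrm{loc}}(\R\times\omega_2)$-limit $u_\infty$; the split at $1/2$ prevents $u_\infty\equiv 0$. According to how the shifted intervals $\ell_n\omega_1-t_n$ degenerate, $u_\infty$ is supported either in $\R_+\times\omega_2$, in $\R_-\times\omega_2$, or (a priori) in the full cylinder $\R\times\omega_2$. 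The main technical obstacle is ruling out the third alternative; I would do so by invoking that the formal limit operator on $\R\times\omega_2$ has purely continuous spectrum (Fourier decomposition in $x_1$ reduces the eigenvalue equation to a one-parameter family of problems on $\omega_2$, none of which carries a nontrivial $L^2$-eigenfunction in the relevant range). In the surviving two cases, $u_\infty$ feeds into \eqref{eq:3} and gives $\liminf\lambda_\ell\geq\min\{Z^{+\infty},Z^{-\infty}\}$.

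For the gap phenomenon when $A_{12}\nabla_\xi W\not\equiv 0$, I would test $Z^{+\infty}$ with $v(x_1,\xi)=f(x_1)W(\xi)$. Integration by parts in $\xi$ using \eqref{eq:54} yields
\begin{equation*}
\frac{\int_{\R_+\times\omega_2}(A\nabla v)\cdot\nabla v}{\int v^2}=\mu_1+\overline{A_{11}}\,\frac{\int_0^\infty(f')^2\,dx_1}{\int_0^\infty f^2\,dx_1}-\beta\,\frac{f(0)^2}{\int_0^\infty f^2\,dx_1},
\end{equation*}
where $\overline{A_{11}}=\int_{\omega_2}A_{11}W^2\,d\xi$, $\beta=\int_{\omega_2}(A_{12}\cdot\nabla_\xi W)\,W\,d\xi$, and the $-\beta f(0)^2$ term comes from $\int_0^\infty(f^2)'\,dx_1=-f(0)^2$ at the Neumann end $x_1=0$. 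If $\beta\ne 0$, the exponential $f(x_1)=e^{-\alpha x_1}$ with small $\alpha$ drives the quotient below $\mu_1$ on the half-cylinder whose sign matches $\beta$. If $\beta=0$ but $A_{12}\cdot\nabla_\xi W\not\equiv 0$, expand $A_{12}\cdot\nabla_\xi W=\sum_{k\geq 2}c_k W_k$ in the orthonormal eigenbasis of $-\div_\xi(A_{22}\nabla_\xi\cdot)$, pick the first $k$ with $c_k\ne 0$, and enrich to $v=fW+\varepsilon gW_k$; the first-order-in-$\varepsilon$ correction contains a nonzero boundary term proportional to $c_k f(0)g(0)$, which can be tuned (by choice of sign of $g$ and scale of $\varepsilon$) to dominate the positive $O(\varepsilon^0)+O(\varepsilon^2)$ contributions and bring the quotient below $\mu_1$.

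For the equality case $A_{12}\nabla_\xi W\equiv 0$, the function $u(x_1,\xi)=W(\xi)$ directly solves \eqref{eq:5} with eigenvalue $\mu_1$: the Dirichlet condition $u|_{\gamma_\ell}=0$ follows from $W|_{\partial\omega_2}=0$; the Neumann flux $(A\nabla u)\cdot\tilde\nu=\pm(A_{11}\partial_1 u+A_{12}\cdot\nabla_\xi u)=\pm A_{12}\cdot\nabla W$ vanishes on $\Gamma_\ell$ by hypothesis; and $-\div(A\nabla u)=-\div_\xi(A_{22}\nabla W)=\mu_1 W=\mu_1 u$, since the vector $(A_{12}\cdot\nabla_\xi W,\,A_{22}\nabla_\xi W)$ is independent of $x_1$. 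Since $u=W>0$ in $\Omega_\ell$, uniqueness of the positive first eigenfunction (Krein-Rutman) forces $\lambda_\ell=\mu_1$ exactly for every $\ell>0$.
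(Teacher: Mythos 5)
The paper does not in fact prove \rth{crs}; it quotes it from \cite{pr} and proves from scratch only the $m\ge 2$ generalization (\rth{upperbound} and \rth{lb}). Your proposal is therefore best assessed against those analogous arguments, and the upper-bound step has a genuine defect. You take a competitor $\varphi$ for $Z^{+\infty}$ with compact support in $[0,R]\times\omega_2$, place it in the interior of $\Omega_\ell$, and zero-extend. But if $\varphi(0,\cdot)\not\equiv 0$ (which any near-minimizer must satisfy whenever $Z^{+\infty}<\mu_1$, since otherwise $\varphi$ zero-extended to $\R\times\omega_2$ is a full-Dirichlet competitor with Rayleigh quotient $\ge\mu_1$), then the zero-extension has a jump across $\{0\}\times\omega_2$ and is not in $H^1(\Omega_\ell)$; if you instead force $\varphi$ to vanish near $x_1=0$, you have imposed a spurious Dirichlet cap and only obtain $\limsup_\ell\lambda_\ell\le\mu_1$. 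Your construction never touches the Neumann part $\Gamma_\ell$ of the boundary, which is the whole source of the gap $Z^{\pm\infty}<\mu_1$. The fix — and this is what \cite{pr} and \rth{upperbound} both do — is to translate $\varphi$ so that its $x_1=0$ face is glued onto an actual endpoint $\{\ell a\}\times\omega_2\subset\Gamma_\ell$ of the interval $\ell\omega_1=(\ell a,\ell b)$, where the natural Neumann condition in \eqref{eq:19} puts no constraint; only then does $\varphi$ become an admissible competitor for $\lambda_\ell$ with the same Rayleigh quotient.

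On the lower bound, the mechanism in \cite{pr} and in this paper (\rlemma{ef} and \rlemma{nm}) is exponential decay of $u_\ell$, both away from and towards the Neumann boundary, yielding a contradiction with the $L^2$-normalization. Your weak-limit extraction is a plausible alternative strategy, but two steps are asserted rather than established: centering at the $L^2$-median $t_n$ and taking a weak $H^1_{\rm loc}$ limit does not rule out $u_\infty\equiv 0$ (the mass may dichotomize into two distant lumps of weight $1/2$, leaving nothing near $t_n$), and excluding the full-cylinder alternative requires knowing $u_\infty\in L^2(\R\times\omega_2)\setminus\{0\}$ before one may appeal to $\operatorname{spec}\subset[\mu_1,\infty)$. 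These are exactly the compactness/tightness issues that the decay estimates are designed to control.

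Your gap-phenomenon computation using $v=f(x_1)W(\xi)$ with $\int_0^{\infty}(f^2)'=-f(0)^2$, and the enrichment $fW+\varepsilon gW_k$ when $\beta=0$, are correct in spirit and indeed reflect \cite{pr}. The equality case — verifying directly that $u(x)=W(\xi)$ is a positive eigenfunction with eigenvalue $\mu_1$ when $A_{12}\nabla_\xi W\equiv 0$ and invoking uniqueness of the first eigenfunction — is exactly the argument used in the final part of the proof of \rth{main} in the present paper.
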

 This result  stands in sharp contrast to the problem with full
 Dirichlet boundary conditions, for which  it was shown in \cite{dir}  that all
 the eigenvalues converge to $\mu_1$ when $\ell$ goes to infinity. We state their result as
 we will use it later on.
\begin{theorem}\emph{[Chipot-Rougirel]}
\label{chipot}
One has for some constant $C>0$,  $$\mu_1 \leq \sigma_\ell^k \leq
\mu_1 + \frac{C}{\ell^2},$$ where $\sigma_\ell^k$ is the $k-th$
eigenvalue  of operator $-\div(A\grad u)$ on $\Omega_\ell$ with Dirichlet boundary conditions.
\end{theorem}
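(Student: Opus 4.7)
\noindent\textbf{Proof proposal for Theorem~\ref{chipot}.}
The plan is to establish the two inequalities separately: the lower bound $\mu_1\le\sigma_\ell^k$ by a Picone-type substitution that produces a nonnegative remainder, and the upper bound $\sigma_\ell^k\le\mu_1+C/\ell^2$ by min-max with an explicit $k$-dimensional family of trial functions obtained from rescaling a fixed basis on $\omega_1$.

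\smallskip

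\noindent\emph{Lower bound.} The idea is the substitution $u(X,\xi)=W(\xi)\,w(X,\xi)$, first performed for $u\in C^\infty_c(\Omega_\ell)$ (a dense subspace of $H^1_0(\Omega_\ell)$) and then extended by approximation. With this substitution,
\[
\nabla u = W\,\tilde\nabla w + w\,(0,\nabla_\xi W),\qquad \tilde\nabla w=(\nabla_X w,\nabla_\xi w),
\]
so expanding $(A\nabla u)\cdot\nabla u$ and integrating yields four kinds of terms. Using the weak form of \eqref{eq:54} with test function $W w^2$ eliminates the two terms involving $A_{22}\nabla_\xi W\cdot\nabla_\xi w$ and $A_{22}\nabla_\xi W\cdot\nabla_\xi W$, leaving $\mu_1\int u^2$ in their place. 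The remaining mixed term is
\[
2\int_{\Omega_\ell} Ww\,A_{12}\nabla_\xi W\cdot\nabla_X w \;=\; \int_{\Omega_\ell} W\,A_{12}\nabla_\xi W\cdot\nabla_X(w^2),
\]
and since for a.e.\ $\xi\in\omega_2$ the function $X\mapsto w^2(X,\xi)$ vanishes on $\partial(\ell\omega_1)$ (because $u=0$ there and $W(\xi)>0$), the inner $X$-integral of $\nabla_X(w^2)$ vanishes by the divergence theorem. This produces the identity
\[
\int_{\Omega_\ell}(A\nabla u)\cdot\nabla u \;=\; \mu_1\int_{\Omega_\ell} u^2 \;+\; \int_{\Omega_\ell} W^2\,(A\tilde\nabla w)\cdot\tilde\nabla w.
\]
The last integrand is pointwise nonnegative by \eqref{ellip}, so the Rayleigh quotient of $-\div(A\nabla\cdot)$ on $\Omega_\ell$ with Dirichlet data is bounded below, pointwise on all of $H^1_0(\Omega_\ell)$, by $\mu_1$. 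Applying the variational (min-max) characterization to every $k$-dimensional subspace yields $\sigma_\ell^k\ge\mu_1$ for all $k$.

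\smallskip

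\noindent\emph{Upper bound.} Fix an $L^2$-orthonormal system $\tilde\varphi_1,\ldots,\tilde\varphi_k\in H^1_0(\omega_1)$ of first $k$ Dirichlet eigenfunctions of $-\Delta$ on $\omega_1$, with eigenvalues $\Lambda_1\le\cdots\le\Lambda_k$. Define rescaled trial functions $\varphi_j(X)=\tilde\varphi_j(X/\ell)$ and set $u_j(X,\xi)=\varphi_j(X)W(\xi)\in H^1_0(\Omega_\ell)$. These span a $k$-dimensional subspace $V_k$. For any $u=\Phi(X)W(\xi)$ with $\Phi=\sum c_j\varphi_j$, the same integration-by-parts in $X$ that killed the mixed term above still applies (since $\Phi$ vanishes on $\partial(\ell\omega_1)$), giving
\[
\int_{\Omega_\ell}(A\nabla u)\cdot\nabla u \;=\; \int_{\Omega_\ell} W^2\,A_{11}\nabla_X\Phi\cdot\nabla_X\Phi \;+\; \int_{\Omega_\ell}\Phi^2\,A_{22}\nabla_\xi W\cdot\nabla_\xi W.
\]
Using $\int_{\omega_2}W^2=1$, the equation for $W$, \eqref{mod}, and the scaling $X=\ell Y$ to get $\int|\nabla_X\Phi|^2=\ell^{m-2}\int|\nabla\tilde\Phi|^2$ and $\int\Phi^2=\ell^m\int\tilde\Phi^2$, the Rayleigh quotient satisfies
\[
\frac{\int(A\nabla u)\cdot\nabla u}{\int u^2}\;\le\;\mu_1 \;+\; \frac{C_A}{\ell^2}\,\frac{\int_{\omega_1}|\nabla\tilde\Phi|^2}{\int_{\omega_1}\tilde\Phi^2}\;\le\;\mu_1+\frac{C_A\Lambda_k}{\ell^2},
\]
uniformly in $u\in V_k\setminus\{0\}$. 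The min-max characterization of $\sigma_\ell^k$ then gives the upper bound with $C=C_A\Lambda_k$.

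\smallskip

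\noindent\emph{Main obstacle.} The delicate point is the substitution $u=Ww$: near $\partial\omega_2$ one has $W\to 0$ (only to first order, by the Hopf lemma), so the quotient $w=u/W$ is not a priori in $H^1$. Making the Picone identity rigorous for all $u\in H^1_0(\Omega_\ell)$ requires either approximation by smooth functions compactly supported in $\Omega_\ell$ together with a careful boundary analysis, or a Hardy-type inequality in $\omega_2$ to control the weighted integrals $\int W^2|\tilde\nabla w|^2$. Once the identity is secured, the min-max consequence for $k$-th eigenvalues is immediate.
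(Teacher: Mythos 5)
Theorem~\ref{chipot} is quoted in this paper from Chipot--Rougirel \cite{dir}; the authors do not reprove it here, so there is no \enquote{paper's own proof} to compare against. On its own merits, your proposal is correct and is essentially the standard argument, which I believe also coincides with the original proof in \cite{dir}: a Picone-type substitution $u=Ww$ for the lower bound, and separated-variable trial functions $\Phi(X)W(\xi)$ with $\Phi$ an $\ell$-rescaled Dirichlet eigenfunction of $\omega_1$ for the upper bound. A few small remarks. For the lower bound you do not need the full min-max machinery: once $\int(A\nabla u)\cdot\nabla u\ge\mu_1\int u^2$ holds for every $u\in H^1_0(\Omega_\ell)$, you already have $\sigma_\ell^1\ge\mu_1$, and monotonicity of the eigenvalue sequence gives $\sigma_\ell^k\ge\sigma_\ell^1$. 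Both of your integration-by-parts cancellations rely tacitly on the crucial structural assumption that $A=A(\xi)$ is independent of $X$ (so that $W\,A_{12}\nabla_\xi W$ factors out of the inner $X$-integral of $\nabla_X(w^2)$); this is indeed the hypothesis in force in this paper, but it is worth flagging because the theorem fails without it. Finally, you are right to single out the regularity of $w=u/W$ as the delicate point in the lower bound; the standard way to make it rigorous is exactly the approximation plus Hopf/Hardy argument you mention, or alternatively one can avoid the division entirely by writing the required inequality directly in the Picone form $(A\nabla u)\cdot\nabla u-(A\nabla W)\cdot\nabla(u^2/W)\ge 0$ (as in \eqref{eq:55}) and integrating, testing the $W$-equation against $u^2/W$, which is in $H^1_0(\Omega_\ell)$ once one knows $u/W$ is bounded near $\del\omega_2$. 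Your upper bound is clean and yields the explicit constant $C=C_A\Lambda_k$, where $\Lambda_k$ is the $k$-th Dirichlet eigenvalue of $-\Delta$ on $\omega_1$, which indeed grows with $k$ as the statement allows.
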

\begin{remark}
  \label{rem:RQ}
  In the special case $k=1$  we have the well known variational
  characterization
  \begin{equation}
    \label{eq:26}
    \sigma_\ell^1=\inf_{u\in
      H^1_0(\Omega_\ell)}\frac{\int_{\Omega_\ell}(A\nabla u)\cdot
      \nabla u}{\int_{\Omega_\ell}u^2}\,.
  \end{equation}
\end{remark}
 The main aim of this article  is  to study  the limiting behaviour of
 $\lambda_\ell^k$ for the case $m >1$, or in other words, when the 
 cylinder  becomes unbounded in more than one direction. In
 particular, we will show that the limiting behaviour in this case is
 determined by an appropriate minimization  problem set on domain of the
 type  $(-\infty, 0)\times \omega_2$. Note that the
 domain
 $(-\infty, 0)\times \omega_2$ has  
 $m-1$ dimensions less than the domain $\Omega_\ell$ on which the original problem
 \eqref{eq:5} is defined. This is clearly very helpful from the point of view of numerical
 analysis.

 \par
 To state our main theorem we will need to introduce a family of
 eigenvalue problems, of the same type as the problem in
 \eqref{eq:3}, each of them is associated with some direction
 $\nu\in S^{m-1}$. So for each $\nu\in S^{m-1}$ let
 \begin{equation}\label{xc}
   Z^\nu = \inf_{  \left\{ 0\ne u \in H^1((-\infty, 0)\times  \omega_2)  \ | \ u = 0 \  \textrm{on} \ (-\infty, 0)\times  \del\omega_2 \right\} }  \frac{\int_{(-\infty, 0)\times  \omega_2}(A_\nu \grad u)\cdot \grad u}{\int_{(-\infty, 0)\times  \omega_2}u^2}\,,
 \end{equation}
 where $A _\nu=A_\nu(\xi)$ is the $(p+1)\times(p+1)$ matrix given by
 \begin{equation}
   \label{eq:4}
  A _\nu=\begin{pmatrix} (A_{11}\nu)\cdot \nu   &   \nu^T A_{12}\\
(\nu^T A_{12})^T & A_{22}
\end{pmatrix}
\,.
 \end{equation}
It is easy to deduce from our assumption \eqref{ellip} that all the
matrices $\{A_\nu(\xi)\}_{\nu,\xi}$ are uniformly elliptic, and the
same $c_A$ as in \eqref{ellip} can be taken as
a common lower bound for the least eigenvalue.

 Our main result is then
\begin{theorem}\emph{[Main Result]}
\label{main}  We have
\begin{equation}\label{ty}
\lim_{\ell \rightarrow \infty} \lambda_\ell = \inf_{\nu \in S^{m-1}} Z^\nu\,.
\end{equation}
Furthermore, if \begin{equation}\label{co} A_{12} \grad_{\xi} W  \not\equiv 0  \ a.e. \ \textrm{in}  \ \omega_2 \end{equation}then $\lim_{\ell \rightarrow \infty} \lambda_\ell <  \mu_1$ or else $\lambda_\ell = \mu_1,$ for all $\ell$.
\end{theorem}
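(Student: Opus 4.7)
I organize the proof in three steps: upper bound, matching lower bound, and gap dichotomy.

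\emph{Upper bound.} To show $\limsup_{\ell\to\infty}\lambda_\ell\le\inf_{\nu\in S^{m-1}}Z^\nu$, I fix $\nu\in S^{m-1}$ and take $X_0\in\partial\omega_1$ realizing $\max_{\overline{\omega_1}}X\cdot\nu$; by $C^1$ regularity, $\nu$ is the outward unit normal to $\omega_1$ at $X_0$. Given $\varepsilon>0$, pick an almost minimizer $v$ for $Z^\nu$ with compact support in $[-R,0]\times\overline{\omega_2}$. Writing $X-\ell X_0=t\nu+Y$ with $Y\in\nu^\perp$, define the trial function on $\Omega_\ell$
\[
\tilde u_\ell(X,\xi):=v(t,\xi)\,\chi(Y/r_\ell),
\]
where $\chi\in C_c^\infty(\nu^\perp)$ is a fixed smooth bump equal to $1$ on the unit ball and $r_\ell\to\infty$ is chosen so that $\supp\tilde u_\ell\subset\overline{\Omega_\ell}$. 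The $C^1$ smoothness of $\partial\omega_1$ at $X_0$ ensures such an $r_\ell$ exists, since the graph defect of $\ell\omega_1$ from its tangent half-space at $\ell X_0$ is $o(|Y|)$. Where $\chi\equiv 1$, $\tilde u_\ell$ depends on $X$ only through $X\cdot\nu$, so $(A\nabla\tilde u_\ell)\cdot\nabla\tilde u_\ell=\chi^2(A_\nu\nabla v)\cdot\nabla v+E_\ell$, with $E_\ell$ involving only $\nabla\chi(Y/r_\ell)/r_\ell$ and contributing $O(r_\ell^{-2})$ to the Rayleigh quotient. Hence $\limsup_\ell\lambda_\ell\le\int(A_\nu\nabla v)\cdot\nabla v/\int v^2<Z^\nu+\varepsilon$, and the bound follows by $\varepsilon\to0$ and taking the infimum over $\nu$.

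\emph{Lower bound.} Extract a subsequence $\ell_n\to\infty$ with $\lambda_{\ell_n}\to\lambda_*$ and set $u_n=u_{\ell_n}$; $\|u_n\|_{L^2}=1$ and $\|\nabla u_n\|_{L^2}$ is bounded. I apply a concentration-compactness dichotomy. If the $L^2$-mass of $u_n$ stays uniformly away from $\partial(\ell_n\omega_1)$, a Poincar\'e-type argument combined with the cross-sectional bound $\int_{\omega_2}A_{22}\nabla_\xi u\cdot\nabla_\xi u\,d\xi\ge\mu_1\int_{\omega_2}u^2\,d\xi$ forces $\lambda_n\to\mu_1$; since $Z^\nu\le\mu_1$ for every $\nu$ (test $v(t,\xi)=\phi(t)W(\xi)$ with a long plateau $\phi$), the claim is then trivially met. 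Otherwise, after translating by suitable boundary concentration points $\ell_nX_n$ with $X_n\to X_\infty\in\partial\omega_1$, $C^1$ convergence of $\ell_n\omega_1-\ell_n X_n$ to the half-space $H_\nu:=\{X\cdot\nu<0\}$ (with $\nu$ the outward normal at $X_\infty$) combined with $H^1$-weak compactness produces a nontrivial limit $\tilde u_\infty\in H^1_{\mathrm{loc}}(H_\nu\times\omega_2)$ solving the eigenvalue equation on the half-cylinder with Dirichlet on $H_\nu\times\partial\omega_2$ and Neumann on $\partial H_\nu\times\omega_2$ at eigenvalue $\lambda_*$. Decomposing admissible functions into plane waves $e^{ik\cdot X_\perp}v(s,\xi)$ in the $m-1$ translation-invariant directions shows that the bottom of this half-cylinder spectrum equals $Z^\nu$: the cross terms between $\partial_s$ and the transverse momentum cancel by symmetry of $A$, and the remaining energy $\int[(A_\nu\nabla v)\cdot\nabla v+(k^TA_{11}k)v^2]$ is minimized at $k=0$. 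Thus $\lambda_*\ge Z^\nu\ge\inf_\nu Z^\nu$.

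\emph{Gap dichotomy.} If $A_{12}\nabla_\xi W\equiv0$ on $\omega_2$, then $u(X,\xi)=W(\xi)$ is admissible (Dirichlet on $\gamma_\ell$ from $W|_{\partial\omega_2}=0$, and the conormal flux on $\Gamma_\ell$ reduces to $A_{12}\nabla_\xi W\cdot\tilde\nu\equiv 0$), is positive, and has Rayleigh quotient $\mu_1$; uniqueness of the positive first eigenfunction forces $\lambda_\ell=\mu_1$ for every $\ell$. When $A_{12}\nabla_\xi W\not\equiv0$, I adapt the $m=1$ construction from \cite{pr}: for a carefully chosen direction $\nu$, take a trial function $v_\delta(t,\xi)=\phi(t)W(\xi)+\delta\psi(t,\xi)$ with $\psi\perp_{L^2(\omega_2)}W$ so that the first-order variation in $\delta$ of $\int(A_\nu\nabla v_\delta)\cdot\nabla v_\delta/\int v_\delta^2$ is a strictly negative linear functional of $\nu^TA_{12}\nabla_\xi W$; this gives $Z^\nu<\mu_1$ and, together with the main equality, $\lim\lambda_\ell<\mu_1$. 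The crux of the whole proof is the lower bound, and within it the nontriviality of $\tilde u_\infty$: one must preclude escape of $L^2$-mass in the $\nu^\perp$ directions as $\ell_n\to\infty$, which calls for a covering of $\ell_n\omega_1$ by translation-compatible boundary chambers together with a uniform mass-per-chamber estimate exploiting the cross-sectional spectral gap at $\mu_1$.
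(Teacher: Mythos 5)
Your upper-bound construction and the gap dichotomy match the paper's proof in substance: the upper bound is built from a near-optimal profile $v$ in the $\nu$-direction multiplied by a tangential cutoff concentrated near a boundary point with normal $\nu$ (the paper uses the finite-strip eigenfunction $v_K^\nu$ tensored with $\prod_j\Phi_K$, then flattens $\partial(\ell\omega_1)$ near $\ell P_0$, which is the same idea as your ``graph defect is $o(|Y|)$''); and the dichotomy is handled identically (take $u=W(\xi)$ when $A_{12}\nabla_\xi W\equiv0$, otherwise import the one-dimensional gap result of \cite{pr} for the matrix $A_\nu$). These two parts are fine.

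The lower bound is where you diverge from the paper, and where your argument has a genuine unfilled gap. You propose a blow-up at a boundary concentration point $\ell_n X_n$, $H^1_{\mathrm{loc}}$-weak passage to a solution $\tilde u_\infty$ on the half-cylinder $H_\nu\times\omega_2$, and then a Fourier/plane-wave decomposition in the translation-invariant directions to show the bottom of that half-cylinder operator is $Z^\nu$. Even granting the (correct, but non-trivial) fibered-operator computation, the whole argument collapses if $\tilde u_\infty=0$, and for $m\ge2$ nothing in your sketch rules out the $L^2$-mass spreading over the $(m-1)$-dimensional transverse directions as $\ell_n\to\infty$. You name this yourself as ``the crux'' and gesture at a ``covering by translation-compatible boundary chambers'' with a ``uniform mass-per-chamber estimate,'' but that estimate is precisely what is missing and is not a routine detail. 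A secondary gap in the same vein: an $H^1_{\mathrm{loc}}$ limit solving the PDE at level $\lambda_*$ is not automatically an $L^2$ eigenfunction, so even $\tilde u_\infty\ne0$ would not by itself force $\lambda_*\ge Z^\nu$ without further integrability or growth control.

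The paper avoids any blow-up or compactness argument altogether. It proves the lower bound by contradiction: assuming $\liminf\lambda_\ell<\min_\nu\widetilde Z^\nu$, it shows the $L^2$-mass of $u_\ell$ is simultaneously small in the boundary collar $\Omega_\ell\setminus\Omega_{\ell-\ell^\beta}$ and in the interior $\Omega_{\ell-\ell^\beta}$, contradicting $\|u_\ell\|_{L^2}=1$. The interior smallness (Lemma~\ref{ef}) comes from a Caccioppoli iteration driven by the strict spectral gap $\sigma_\ell^1-\lambda_\ell>\mu>0$ (which is exactly where $A_{12}\nabla_\xi W\not\equiv0$ is used, via Theorems~\ref{crs} and~\ref{chipot}). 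The collar smallness (Lemma~\ref{nm}) is the replacement for your missing mass-per-chamber estimate: one introduces the localized mixed eigenvalues $s_{\ell,K}^{P_0}$ of the moving boundary chambers $\Omega_{\ell,K}^{P_0}$, shows via Lemmas~\ref{lem:Ktoi} and~\ref{conver} that $s_{\ell,K}^{P_0}\to\widetilde Z^\nu$ uniformly, and then iterates a Caccioppoli inequality in the $\nu$-direction to get $\int_{\Omega_{\ell,\ell^\beta}^{P_0}}u_\ell^2\le Ce^{[\ell^\beta]\log\alpha}$, finally summing over $O(\ell^{m-1-\beta})$ chambers covering the collar. This quantitative, non-compactness route is what makes the argument close. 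If you want to complete your blow-up strategy, the Lemma~\ref{nm} scheme (a uniform exponential decay into each boundary chamber, using $s_{\ell,K}^{P_0}>\lambda_\ell+\mu^0$) is essentially what such a ``mass-per-chamber estimate'' would have to look like, so in practice you would be reconstructing the paper's proof in different clothing.
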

Notice   that  \rth{main} is a natural generalization of \rth{crs} since for $m=1$ the relevant unit sphere is $S^0=\{-1,+1\}$.
 In \cite{pr}, under a certain symmetry assumption on the matrix $A$, it was
 also proved  that $\lambda_\ell^2$ has the same limit as
 $\lambda_\ell$, but no information was given on the third and higher
 eigenvalues. The result of \cite{pr}, was generalized recently in
 \cite{rrr} to $\lambda_\ell^k$ for all $k$  (under the same symmetry
 assumptions as in \cite{pr}). It turns out that for $m\ge2$ the problem for the higher
 eigenvalues is
 simpler and we are able to prove in this case 
 that $\lambda_\ell^k$ converges  to the same limit as
 $\lambda_\ell$, for all $k$. This immediately  leads to the existence
 of  a ``gap phenomenon" for all $k$ when \eqref{co} holds.
 \begin{theorem}\emph{[The Case of Higher Order Eigenvalues]}
   \label{high}
  We have
\begin{equation}\label{tyk}
\lim_{\ell \rightarrow \infty} \lambda_\ell^k = \inf_{\nu \in S^{m-1}}
Z^\nu\,,\text{ for all }k>1.
\end{equation}
In particular, when \eqref{co} holds, the value in \eqref{tyk} is
strictly lower than $\mu_1$.
 \end{theorem}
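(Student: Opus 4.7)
\textbf{Lower bound.} By the min-max characterization, $\lambda_\ell^k\ge\lambda_\ell^1$ for every $k\ge 1$, so \rth{main} immediately gives $\liminf_{\ell\to\infty}\lambda_\ell^k\ge\inf_{\nu\in S^{m-1}}Z^\nu$, and everything reduces to the matching upper bound.

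\textbf{Key idea.} Since $m\ge 2$, the space $\nu^{\perp}\subset\R^m$ perpendicular to any $\nu\in S^{m-1}$ has dimension $m-1\ge 1$, leaving room for $k$ disjoint lateral copies of a single test function adapted to direction $\nu$; one then obtains the upper bound by applying the min-max principle to this $k$-dimensional subspace. Fix $\epsilon>0$, choose $\nu^*\in S^{m-1}$ with $Z^{\nu^*}<\inf_\nu Z^\nu+\epsilon$, and pick a smooth approximate minimizer $v\in H^1((-L,0)\times\omega_2)$ for \eqref{xc} vanishing on $(-L,0)\times\partial\omega_2\cup\{-L\}\times\omega_2$ (but free on $\{0\}\times\omega_2$, which will align with the Neumann part $\Gamma_\ell$) and whose Rayleigh quotient differs from $Z^{\nu^*}$ by at most $\epsilon$. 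Let $x^*\in\partial\omega_1$ be a point with outward unit normal $\nu^*$ (a maximizer of $x\cdot\nu^*$ on $\overline{\omega_1}$), set $X_0=\ell x^*$, and write $X=X_0+y+\eta\nu^*$ with $y\in\nu^{*\perp}$. By $C^1$ regularity, $\partial(\ell\omega_1)$ is locally the graph $\eta=g_\ell(y)$ with $g_\ell(0)=0=\nabla g_\ell(0)$, and $\|\nabla g_\ell\|_{L^\infty(B_R)}\to 0$ as $\ell\to\infty$ for each fixed $R$ (since $\partial(\ell\omega_1)=\ell\,\partial\omega_1$).

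\textbf{Construction of the test functions.} Pick $k$ centers $y_1,\dots,y_k\in\nu^{*\perp}$ with pairwise distance $>4r$ and a cut-off $\chi\in C_c^\infty(B_{2r})$ with $\chi\equiv 1$ on $B_r$, $\|\nabla\chi\|_\infty\le C/r$. Define
$$\tilde v_i(X_0+y+(\eta+g_\ell(y))\nu^*,\xi):=v(\eta,\xi)\,\chi(y-y_i),\qquad i=1,\dots,k,$$
extended by zero to $\Omega_\ell$. The flattening $(y,\eta)\mapsto(y,\eta+g_\ell(y))$ has unit Jacobian and converges to the identity in $C^1(B_{2r})$ as $\ell\to\infty$, so each $\tilde v_i\in H^1(\Omega_\ell)$ vanishes on $\gamma_\ell$, is free on $\Gamma_\ell$, and the supports are pairwise disjoint for $\ell$ large. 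A change of variables together with the divergence identity $\int\chi\,\nabla_y\chi\,dy=0$ (which annihilates the mixed cut-off cross terms) gives $R(\tilde v_i)=\bigl(\int (A_{\nu^*}\nabla v)\cdot\nabla v\bigr)\big/\bigl(\int v^2\bigr)+O(r^{-2})+o_\ell(1)\le Z^{\nu^*}+C\epsilon$ after first fixing $r=r(\epsilon)$ large and then letting $\ell\to\infty$.

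\textbf{Conclusion and main obstacle.} Disjoint supports yield $R(\sum a_i\tilde v_i)\le\max_i R(\tilde v_i)$, so the min-max principle applied to $\mathrm{span}(\tilde v_1,\dots,\tilde v_k)$ gives $\lambda_\ell^k\le\max_i R(\tilde v_i)$; sending $\ell\to\infty$ and then $\epsilon\to 0$ establishes \eqref{tyk}. The gap assertion under \eqref{co} then follows immediately from \rth{main}, since the common limit is already known to lie strictly below $\mu_1$ in that case. The delicate point is the ordering of limits in the two error scales: the cut-off error $O(r^{-2})$ is $\ell$-independent and must be controlled first by taking $r=r(\epsilon)$ large, whereas the curvature error $o_\ell(1)$ is small only on a fixed ball $B_{(4k+2)r}$ once $\ell$ is taken large afterwards; this forces the order $r\to\infty$ before $\ell\to\infty$.
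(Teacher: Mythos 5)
Your proof is correct and reaches the same conclusion by a genuinely different route. The paper picks $k$ \emph{distinct} directions $\nu_1,\dots,\nu_k\in S^{m-1}$, constructs one test function concentrated near a boundary point with each normal, forms a linear combination $W_\ell=\sum_j\alpha_\ell^j w_\ell^j$ with the $\alpha_\ell^j$ chosen to solve the $(k-1)\times k$ linear system enforcing orthogonality to $u_\ell^1,\dots,u_\ell^{k-1}$, and then needs the continuity of $\nu\mapsto Z^\nu$ (Lemma~\ref{lem:Z-nu}) to choose the $\nu_j$ all within $\varepsilon$ of a fixed $\nu$. You instead exploit $m\ge2$ more directly: you place $k$ laterally shifted copies of a \emph{single} test profile adapted to one near-optimal direction $\nu^*$ (exactly the "more space" the paper alludes to), and you apply the Courant--Fischer min-max over the $k$-dimensional span of these disjointly supported functions, so $\lambda_\ell^k\le\max_iR(\tilde v_i)$ directly. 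This eliminates both the algebraic solvability step and the appeal to Lemma~\ref{lem:Z-nu}, since all $k$ copies share the same limiting Rayleigh quotient $Z^{\nu^*}$. The per-copy estimate you carry out (rotation to align $\nu^*$ with ${\bf e}_1$, flattening by $\eta\mapsto\eta+g_\ell(y)$, the Gauss-style identity $\int\chi\nabla_y\chi=0$ killing the cross terms, cutoff error $O(r^{-2})$, curvature error $o_\ell(1)$ for fixed $r$) mirrors the paper's Theorem~\ref{upperbound} computation and is sound, including the limit ordering: fix $r=r(\varepsilon)$, send $\ell\to\infty$, then $\varepsilon\to0$. Your version is slightly more economical; the paper's version via \eqref{eq:40} buys nothing extra here except avoiding an explicit invocation of the subspace form of min-max.
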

\par  The paper is  arranged as follows. In the next section we
 introduce various notation, auxiliary spaces and some known facts
 that  will be used throughout this paper.  In
 Section \ref{sec2}  we  provide  the  upper bound estimate for
 $\lambda_\ell$ by proving Theorem
 \ref{upperbound}. The  lower bound estimate   for $\lambda_\ell$ is
the objective of Section \ref{sec44}. Its proof in \rth{lb} provides
the necessary ingredient to conclude the proof of \rth{main}. In the
 last section, Section\,\ref{sec:high}, we 
 will  give the proof of Theorem \ref{high}, concerning the higher
 order eigenvalues.

 Various other problems of such  type  ($\ell \rightarrow \infty$) had
 been studied in the past. Beside the work mentioned above, it
 includes variational problem, second order elliptic equations, Stokes
 equation, problems involving fractional Laplacian, variational
 inequalities and many others. To obtain more idea about  of this type
 of works we refer to \cite{ b, delpino, mard, alex, indra,
   so,allaire, gt,rooy,  Karen} and the references mentioned there
 in. In most cases, the limiting behaviour of the parameter involved
 in such problems finds its connection with an appropriate problem set
 on the cross section of the cylinder $\omega_2$. We again  emphasize
 that in our case  the problem becomes independent of the associated
 problem \eqref{eq:54} on the cross section. Concerning results of the
 type considered here, it is worth mentioning the recent works
 \cite{ers, rrr} that generalize the results of \cite{pr}
 to $p$-Laplacian type operators, with $p>2$. Most of our techniques are variational, so it seems plausible that a generalization to systems of most or all of the results could be possible, but we have not pursued this issue in the present manuscript.
 \\[2mm]
{\bf Acknowledgments.} The authors thank the anonymous referee for his
or her helpful comments that improved the clarity of the manuscript. P.\,Roy is supported by the Core Research Grant (CRG/2022/007867) of SERB,
India and  Pradeep Jotwani young faculty fellowship from IIT Kanpur.
Part of this  was done  when P.\,Roy was visiting the
Technion. He thanks the Mathematics Department for
its invitation and hospitality.
The research of I.\,Shafrir was supported by funding from the Martin and Sima Jelin Chair in Mathematics.
\section{Notations, various auxilliary spaces and problems }\label{33}

a
The suitable space  for our problem  \eqref{eq:5} is
\begin{equation*}
V(\Omega_\ell) = \left\{ v \in H^1(\Omega_\ell) \ | \ v = 0  \text{ on
  }\gamma_\ell  \right\}~(\text{see }\eqref{eq:decom}),
\end{equation*}
 where the boundary condition should be interpreted in the sense of
 traces.  

 We denote by $B(X_0,K)\subset\R^m$ the ball of radius $K$ centered at
 $X_0$ (w.r.t.~the Euclidean norm) and by
  ${\widetilde B}_K(p)$ the ball  of center $p$ and radius $K$,
  w.r.t.~the Euclidean norm 
  in the $(m-1)$-dimensional $x'$-space, as defined at the beginning
  of the Introduction.
By $S^{m-1}$ we denote the unit sphere in $\R^m$. 

   Fix a point $P_0\in\partial\omega_1$. Since 
  $\partial\omega_1$ is assumed to be of class $C^1$, in a
  neighborhood of $P_0\in\partial\omega_1$ the boundary of $\omega_1$ can be described
  as a graph of a $C^1$ function. We may assume without loss of generality
  that the normal vector to $\partial\omega_1$ at $P_0$ is
  $\nu={\bf e}_1 =(1,0,\ldots,0)\in\R^m$, and by applying an appropriate shift we may further assume that the point
$P_0$ is the origin in ${\R}^{m}$.   
  There exist  $\delta>0$
  and a function $f=f^{P_0}\in C^1( {\widetilde B}_\delta(0))$ such
  that 
\begin{equation}\label{bet} \omega_1\cap B(P_0,\delta)=\{X=(x_1,x')\in
  B(P_0,\delta) \big|  \  x_1<f(x') \} .
\end{equation}
 Moreover, our assumption about the normal vector at $P_0$ implies
 that
 \begin{equation}
   \label{eq:2}
   \nabla_{x'} f(0)=(0,\ldots,0)\in{\R}^{m-1}.
 \end{equation}
Setting $f_\ell(x')=f_\ell^{P_0}(x')=\ell f(x'/\ell)$ we deduce from \eqref{bet} that 
\begin{equation}\label{eq:bet} 
\ell\omega_1\cap B(\ell P_0,\delta\ell)=\{X=(x_1,x')\in
  B(\ell P_0,\delta\ell) \big|  \  x_1<f_\ell(x') \} .
\end{equation}
 Note that a fixed value of  $\delta>0$ can be chosen such that
 \eqref{bet}--\eqref{eq:bet} hold for the local parameterizations of
 $\partial\omega_1$ near all the points
 $P_0\in\partial\omega_1$
 (after an appropriate change of variables). It then follows from
 \eqref{eq:2} that 
\begin{equation}\label{dsada}
        \|\grad_{x'} f_\ell\|_{L^\infty( \widetilde{B}_{\delta\ell}(0))}=o(\delta)\,,
      \end{equation}
 and an analogous estimate holds for the parameterizations around all
 points $P_0\in\partial\omega_1$. 
 
\smallskip

Next we introduce some sets and spaces  that will be useful in the sequel. Consider
$P_0\in\partial\omega_1$ with normal vector $\nu\in{
  S}^{m-1}$, and assume first that 
$P_0$ is the origin and that $\nu={\bf e}_1$.  For any  $0<K
\leq 2\ell^\beta$ define (under the above assumptions on $P_0$ and $\nu$):
\begin{equation}\label{kk1'}
\Omega_{\ell, K}^{P_0} := \left\{ X\in\ell\omega_1\ \big|  \
  f_\ell(x') -K < x_1 <  f_\ell(x^\prime),\ x' \in {\widetilde B}_K(0) \right\} \times \omega_2\,.
\end{equation}
Note that $\Omega_{\ell, K}^{P_0}\subset B(\ell P_0,\delta\ell)\times\omega_2$ for
sufficiently large $\ell$. In the general case when $\nu$ is any vector
in $S^{m-1}$ and $P_0$ is not necessarily the origin, we denote
by    $\Omega_{\ell, K}^{P_0}$ the image of a set of the form
\eqref{kk1'} by a rotation in the $X$-space 
that sends ${\bf e}_1$ to $\nu$ and an appropriate translation. In the
rest of the section $\nu$ denotes an arbitrary vector in $S^{m-1}$.
                                                                                                                                                                                                                                                                                                                                                                       
The following notation will also be useful when we take the limit
$\ell\to\infty$. For each $\nu\in{ S}^{m-1}$ set 
\begin{equation}\label{kk2}
 \mathcal{B}_{K}(\nu) :=\left\{ X\in\R^m\ \big|
  X\cdot\nu\in(-K,0)\text{ and }\|X-(X\cdot\nu)\nu\|<K\right\}\times\omega_2
\end{equation}and
\begin{equation*}
\label{poi}
\mathcal{B}_{\infty}(\nu) := \left\{ X\in\R^m\ \big|
  X\cdot\nu\in(-\infty,0)\right\}\times\omega_2\,.
\end{equation*}
With the sets defined above we associate  the following spaces:
\begin{equation*}
\label{kk3}
V \big(\Omega_{\ell, K}^{P_0}\big) := \left\{ u \in H^1(\Omega_{\ell, K}^{P_0})
  \ \big| \ u = 0  \  \text{ on }  \del\Omega_{\ell, K}^{P_0}\setminus
 \Gamma_\ell \right\},
\end{equation*}
\begin{equation*}
\label{kk4}
V\left(\mathcal{B}_{K}(\nu)\right):= \left\{ u \in
  H^1(\mathcal{B}_{K}(\nu)) \ \big|  \ u = 0  \ \textrm{on}  \ \del
  \mathcal{B}_{K}(\nu)\setminus \big(\{X\in\R^m\,|\,X\cdot\nu=0\}\times\omega_2\big)\right\}
\end{equation*}
and
\begin{equation*}
\label{ghv}
V\left(\mathcal{B}_{\infty}(\nu)\right) :=  \left\{ u\in
  H^1\left(\mathcal{B}_{\infty}(\nu) \right)  \  \big| \  u= 0  \
  \textrm{on}
\ \del
  \mathcal{B}_{\infty}(\nu)\setminus \big(\{X\in\R^m\,|\,X\cdot\nu=0\}\times\omega_2\big)\right\}.  
\end{equation*}
\smallskip

For each $\nu \in {S}^{m-1}$ we define
\begin{align}
\label{eq:s_K}
s_K^\nu= \inf_{u \in V(\mathcal{B}_{K}(\nu))}\frac{\int_{\mathcal{B}_{K}(\nu)} (A \grad u)\cdot \grad u }{ \int_{\mathcal{B}_{K}(\nu)} u^2}\\
\intertext{ and }
\label{t_i}
\widetilde{Z}^\nu = \inf_{u \in V(\mathcal{B}_{\infty}(\nu))}\frac{\int_{\mathcal{B}_{\infty}(\nu)} (A \grad u)\cdot \grad u }{ \int_{\mathcal{B}_{\infty}(\nu)} u^2}\,.
\end{align}
Similarly, for each $P_0\in\partial\omega_1$ with normal vector
$\nu\in{S}^{m-1}$ as above we set
\begin{equation}
\label{eq:s_K-Om}
s_{\ell,K}^{P_0}= \inf_{u \in
  V(\Omega^{P_0}_{\ell,K})}\frac{\int_{\Omega^{P_0}_{\ell,K}} (A \grad
  u)\cdot \grad u }{ \int_{\Omega^{P_0}_{\ell,K}} u^2}\,.~
\end{equation}
Note that $s_{\ell,K}^{P_0}$ is attained by a unique positive
function $w_{\ell,K}$ satisfying $\int_{\Omega_{\ell, K}^{P_0}} w_{\ell,K}^2=1$ and 
\begin{equation}
\label{eq:e22}
\left\{
\begin{aligned}
  -&\textrm{div}(A \nabla w_{\ell,K})=s_{\ell,K}^{P_0} w_{\ell,K}  &\text{ in }  \Omega_{\ell, K}^{P_0} ,\\
  & (A \nabla w_{\ell, K})\cdot\tilde{\nu}=0 &\text{ on }     \partial\Omega_{\ell, K}^{P_0} \cap \Gamma_\ell, \\
     &w_{\ell, K}=0  &\text{ on } \del\Omega_{\ell, K}^{P_0}\setminus
 \Gamma_\ell.
\end{aligned}
\right.
\end{equation}
 In \eqref{eq:e22} we denoted by $\tilde\nu$  the unit exterior normal
 on $\partial\Omega_{\ell, K}^{P_0} \cap \Gamma_\ell$.
 We conclude this section with a simple and useful property of $Z^\nu$:

 \begin{lemma}
   \label{lem:Z-nu}
   The map $\nu \mapsto Z^\nu$ is
continuous on $S^{m-1}$.
\end{lemma}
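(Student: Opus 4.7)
The plan is to exploit the fact that in the definition of $Z^\nu$ the underlying domain $(-\infty,0)\times\omega_2$ and its associated boundary conditions are completely independent of $\nu$ --- only the quadratic form changes. Consequently the same admissible class of test functions can be used for every direction, and continuity reduces to comparing the coefficient matrices $A_\nu$ and $A_{\nu'}$ as $\nu' \to \nu$.

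First I would observe, inspecting \eqref{eq:4}, that each entry of $A_\nu(\xi)$ is either independent of $\nu$ (the $A_{22}$ block), linear in $\nu$ (the $\nu^TA_{12}$ block), or quadratic in $\nu$ (the $(A_{11}\nu)\cdot\nu$ entry). Combined with the uniform bound \eqref{mod}, this gives a Lipschitz estimate
\begin{equation*}
\|A_{\nu_1}(\xi)-A_{\nu_2}(\xi)\|\le C\,|\nu_1-\nu_2|,\quad\text{a.e. }\xi\in\omega_2,
\end{equation*}
with a constant $C$ depending only on $C_A$. Hence for any $u$ in the common admissible space,
\begin{equation*}
\left|\int_{(-\infty,0)\times\omega_2}(A_{\nu_1}\nabla u)\cdot\nabla u-\int_{(-\infty,0)\times\omega_2}(A_{\nu_2}\nabla u)\cdot\nabla u\right|\le C|\nu_1-\nu_2|\int_{(-\infty,0)\times\omega_2}|\nabla u|^2.
\end{equation*}

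Next I would note that $Z^\nu$ is uniformly bounded above in $\nu\in S^{m-1}$: plugging a fixed test function of the form $u(x_1,\xi)=\varphi(x_1)W(\xi)$ with $\varphi\in C_c^\infty((-\infty,0))$ into the Rayleigh quotient gives a bound independent of $\nu$, using \eqref{mod}. Then, given $\epsilon>0$ and $\nu_1\in S^{m-1}$, I choose an admissible $u$ with $\int u^2=1$ and Rayleigh quotient for $A_{\nu_1}$ at most $Z^{\nu_1}+\epsilon$. The ellipticity assumption \eqref{ellip} (which, as the paper notes, carries over to $A_\nu$ with the same constant $c_A$) gives
\begin{equation*}
\int|\nabla u|^2\le\frac{1}{c_A}\int(A_{\nu_1}\nabla u)\cdot\nabla u\le\frac{Z^{\nu_1}+\epsilon}{c_A}.
\end{equation*}
Using $u$ as a test function in the definition of $Z^{\nu_2}$ and the coefficient estimate above yields
\begin{equation*}
Z^{\nu_2}\le Z^{\nu_1}+\epsilon+\frac{C(Z^{\nu_1}+\epsilon)}{c_A}|\nu_1-\nu_2|.
\end{equation*}
Swapping the roles of $\nu_1$ and $\nu_2$ and letting $\epsilon\to0$, together with the uniform upper bound on $Z^\nu$, I obtain a Lipschitz estimate $|Z^{\nu_1}-Z^{\nu_2}|\le C'|\nu_1-\nu_2|$, which is stronger than continuity.

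There is no serious obstacle here; the only point one must not gloss over is the $\nu$-independence of the underlying domain and admissible space (which is what distinguishes $Z^\nu$ from the half-space quantity $\widetilde Z^\nu$ defined on $\mathcal B_\infty(\nu)$, whose dependence on $\nu$ would require a change-of-variables argument instead).
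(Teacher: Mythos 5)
Your proof follows the same strategy as the paper: exploit the fact that the domain and admissible space in \eqref{xc} are $\nu$-independent, establish the Lipschitz estimate $\sup_\xi\|A_{\nu_1}(\xi)-A_{\nu_2}(\xi)\|\le C|\nu_1-\nu_2|$, and transfer it to the infima. The paper leaves the final step implicit ("the result then follows"); you supply the missing detail — controlling $\int|\nabla u|^2$ by ellipticity and the uniform bound $Z^\nu\le\mu_1$ from \eqref{xc1} — which is exactly how the paper's sketch should be completed.
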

\begin{proof}
  It is clear from the definition \eqref{eq:4} of $A_\nu$ that we have
  $$
\sup_{\xi\in\omega_2}\|A_{\nu_1}(\xi)-A_{\nu_2}(\xi)\|_{L^\infty}\le
C|\nu_1-\nu_2|,\quad\forall\nu_1,\nu_2\in S^{m-1}.
  $$
  The result then follows from \eqref{xc},
  since if $u$ is any admissible function in \eqref{xc}, 
  then
  $$
  \Big|\int_{(-\infty,0)\times\omega_2} (A_{\nu_1}\nabla u)\cdot
  \nabla u-\int_{(-\infty,0)\times\omega_2} (A_{\nu_2}\nabla u)\cdot
  \nabla u\Big|\le
 C |\nu_1-\nu_2|\int_{(-\infty,0)\times\omega_2}|\nabla u|^2.
  $$
\end{proof}


\section{The upper bound construction}\label{sec2}

\noi For each $\nu\in S^{m-1}$ let the matrix $A_\nu$
be given by \eqref{eq:4}. We consider on $(-\ell, 0)\times \omega_2
\subset \R^{1+p}$ the  first eigenvalue and the corresponding (unique) 
normalized positive  
eigenfunction, for the following  
problem:
\begin{equation}
\label{eq:6}
\left\{
\begin{aligned}
  -&\textrm{div}(A_{\nu} \nabla v_{\ell}^\nu)= Z_{\ell}^\nu v_{\ell}^\nu  \quad\text{ in }   (-\ell, 0) \times \omega_2,\\
    & v_\ell^\nu=0 \quad\text{ on } \left((-\ell, 0) \times \del\omega_2 \right)\cup \left(\{ -\ell \} \times \omega_2\right), \\
    & (A_\nu \nabla v_\ell^\nu)\cdot{\bf e_1}=0 \quad\text{ on } \{ 0 \} \times \omega_2,\\
    & v_\ell^\nu>0 \text{ in }(-\ell, 0) \times \omega_2\text{ and }\int_{(-\ell, 0) \times \omega_2} | v_\ell^\nu |^2 =1.
\end{aligned}
\right.
\end{equation}
Here ${\bf e_1}$ denotes the unit vector in the direction of the
$x_1$-coordinate.

  We recall, see   \cite[Lemma 5.1 and Lemma 5.2]{pr}, that  \begin{equation}\label{xc1}
Z^\nu = \lim_{\ell \rightarrow \infty} Z_\ell^\nu\le\mu_1\,.
        \end{equation}

\begin{theorem}\emph{[Upper Bound]}\label{upperbound}
 We have
$$\limsup_{\ell \rightarrow \infty} \lambda_\ell  \leq     \inf_{\nu  \in S^{m-1}} Z^\nu. $$
\end{theorem}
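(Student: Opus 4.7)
The plan is to prove, for every fixed $\nu \in S^{m-1}$, the bound $\limsup_{\ell\to\infty}\lambda_\ell \le Z^\nu$; the theorem then follows on taking the infimum over $\nu$. I shall construct an admissible test function for the variational characterization \eqref{eq:19} by planting the positive normalized first eigenfunction $v_K^\nu$ of \eqref{eq:6} (with $\ell$ replaced by a large but fixed $K$, for which the Rayleigh quotient of $v_K^\nu$ equals $Z_K^\nu$) in a neighborhood of a well-chosen boundary point of $\ell\omega_1$, glued to the trivial extension by a tangential cutoff.

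A suitable boundary point exists: since $\omega_1$ is bounded with $C^1$ boundary, the linear function $X \mapsto X\cdot\nu$ attains its maximum over $\overline{\omega_1}$ at some $P_0 \in \partial\omega_1$, where the outward unit normal to $\partial\omega_1$ must coincide with $\nu$. After translation and rotation in $\R^m$ I may assume $P_0 = 0$ and $\nu = \mathbf{e}_1$, so locally $\ell\omega_1$ is given by $\{x_1 < f_\ell(x')\}$; the identity $f_\ell(x') = \ell f(x'/\ell)$ together with $\nabla_{x'}f(0) = 0$ from \eqref{eq:2} yields $\|\nabla_{x'} f_\ell\|_{L^\infty(\widetilde B_{2K}(0))} \to 0$ as $\ell \to \infty$ for every fixed $K$. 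Pick $\eta \in C_c^\infty(\widetilde B_{2K}(0))$ with $\eta \equiv 1$ on $\widetilde B_K(0)$ and $\|\nabla\eta\|_\infty \le C/K$, and for $\ell$ large enough define
$$
u_\ell(x_1, x', \xi) := v_K^\nu\bigl(x_1 - f_\ell(x'),\, \xi\bigr)\,\eta(x')
$$
on $\{x' \in \widetilde B_{2K}(0),\ f_\ell(x') - K < x_1 < f_\ell(x')\}$, and extend by zero to the rest of $\Omega_\ell$. Since $v_K^\nu$ vanishes at $y_1 = -K$ and on $(-K,0)\times\partial\omega_2$, and $\eta$ vanishes outside $\widetilde B_{2K}(0)$, the extension lies in $H^1(\Omega_\ell)$ and vanishes on $\gamma_\ell$, so $u_\ell$ is admissible in \eqref{eq:19}.

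The change of variables $y_1 = x_1 - f_\ell(x')$, $y' = x'$ gives
$$
\nabla u_\ell = \eta\,\partial_{y_1}v_K^\nu\,(1,\, -\nabla_{x'} f_\ell(y'),\, 0_p) + \eta\,(0_m,\, \nabla_\xi v_K^\nu) + v_K^\nu\,(0,\, \nabla_{y'}\eta,\, 0_p).
$$
Because $\nabla_{x'} f_\ell \to 0$ uniformly on $\widetilde B_{2K}(0)$, the quadratic form $(A\,\cdot)\cdot$ evaluated on the sum of the first two vectors converges, uniformly in $y' \in \widetilde B_{2K}(0)$, to $(A_\nu \nabla_{(y_1,\xi)} v_K^\nu)\cdot \nabla_{(y_1,\xi)}v_K^\nu$, whose $dy_1\,d\xi$ integral equals $Z_K^\nu$; hence these two terms contribute $(Z_K^\nu + o_\ell(1))\int\eta^2\,dy'$ to $\int_{\Omega_\ell}(A\nabla u_\ell)\cdot\nabla u_\ell$. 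The cutoff term and the cross terms are estimated via Cauchy--Schwarz, uniform ellipticity of $A$, and the bound $\int|\nabla\eta|^2\,dy' \le CK^{m-3}$, producing an error of order $K^{m-2}$. Combined with $\int u_\ell^2 = \int \eta^2\,dy' \ge cK^{m-1}$ (from $\|v_K^\nu\|_{L^2}=1$), the Rayleigh quotient of $u_\ell$ is at most $Z_K^\nu + C/K + o_\ell(1)$, so $\limsup_{\ell\to\infty}\lambda_\ell \le Z_K^\nu + C/K$. Letting $K \to \infty$ and invoking \eqref{xc1} gives $\limsup_\ell\lambda_\ell \le Z^\nu$, and the infimum over $\nu \in S^{m-1}$ yields the theorem. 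The principal technical obstacle will be the careful matching of the quadratic form of $A$ acting on the ``tilted'' normal direction $(1, -\nabla_{x'}f_\ell, 0_p)$ with the intrinsic form $A_\nu$, and verifying that the cutoff error of order $K^{m-2}$ is strictly dominated by the denominator growth $K^{m-1}$ so that the Rayleigh-quotient error tends to zero.
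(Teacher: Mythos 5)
Your proposal is correct and follows essentially the same construction as the paper: plant the one-dimensional cylinder eigenfunction $v_K^\nu$ near a boundary point of $\ell\omega_1$ with outward normal $\nu$, flatten the boundary via $y_1 = x_1 - f_\ell(x')$, cut off tangentially, and observe that the quadratic form of $A$ on the "tilted" gradient collapses (as $\nabla_{x'}f_\ell \to 0$) to the quadratic form of $A_\nu$, whose integral is $Z_K^\nu$. The only inessential differences are technical: the paper uses an $L^2$-normalized product cutoff $\prod_{j\ge 2}\Phi_K(y_j)$ (so the denominator is exactly $1$ and the tangential cross terms vanish identically via $\int\phi_j\phi_j'=0$, giving a sharper $C/K^2$ error) and couples the scales as $K=\ell^\beta$, whereas you use an unnormalized ball cutoff, bound the cross terms via Cauchy--Schwarz (yielding $C/K$), and decouple the limits $\ell\to\infty$ then $K\to\infty$ — all of which delivers the same conclusion.
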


\begin{proof}
Let $\nu \in S^{m-1}$ be fixed.  Since we assumed that $\del\omega_1$
is of class $C^1$ there exists a point $P_0\in \del\omega_1$ such that
the outward unit  normal  at $P_0$ is  $ \nu$.
\smallskip

In order to change variables in such a way that $\nu$ will be
transformed 
to ${\bf e}_1$, we first choose $m-1$ vectors $p_2,  \ p_3 ,  \ldots ,  p_m  \in \R^m$ that
together with $\nu$ form  an orthonormal basis of $\R^m$.
Let $B$ denote the orthogonal  matrix whose rows are
$\nu,p_2,\ldots,p_m$. Therefore  its transpose $B^T$ is given by
$$B^T = \begin{pmatrix}
  \nu^T  & p_2^T  & .  &  . & . &  p_m^T
\end{pmatrix}.$$
We change variables from $x=(X,\xi)$ to $y=(Y,\xi)$ by setting  $Y =
BX$. Let $v(Y,\xi) = u(X,\xi)$ and denote  $Y = (y_1, \ldots  ,
y_m)=(y_1,y')$.    An easy  computation  gives  that $$\grad_X u =  B^T\grad_Y v.$$
For ease of notation we  will continue to denote by
$\Omega_\ell$ and $\omega_1$ the same domains as above also when they are represented  in the new variables.  A direct
computation yields
\begin{equation}\label{eq:po}
\int_{\Omega_\ell} (A\grad_x u)\cdot \grad_x u  = \int_{\Omega_\ell}
(A^B\grad_y v) \cdot \grad_y v   \    \text{ and }  \ \int_{\Omega_\ell} u^2 = \int_{\Omega_\ell} v^2,
\end{equation}
where $$A^B = \begin{pmatrix}
BA_{11}B^T & BA_{12}\\
\left( BA_{12} \right)^T & A_{22}
\end{pmatrix}.$$

Let $\Omega_{\ell, K}=\Omega_{\ell, K}^{P_0}$ be as in
Section~\ref{33}, but defined using the new variables, i.e., 
\begin{equation}\label{kk1'y}
\Omega_{\ell, K}:= \left\{ Y\in\ell\omega_1\ \big|  \ f_\ell(y') -K <
  y_1 <  f_\ell(y'),\ y' \in {\widetilde B}_K(0) \right\} \times \omega_2\,.
\end{equation}
Fix any $\Phi\in C^\infty_c(\R)$ with $\supp\Phi\subset(-1/\sqrt{m-1},1/\sqrt{m-1})$
satisfying 
\begin{equation}
  \label{eq:8}
  \int_{-\infty}^{\infty}\Phi^2(t)\,dt =1\,.
\end{equation}
Set
 $c_0:=\int_{-\infty}^{\infty}(\Phi^\prime)^2$. For any $K>0$ let
 $\Phi_K(t)=(1/\sqrt{K})\Phi(t/K)$. Then,
\begin{equation}
\label{0}
\int_{-\infty}^{\infty}\Phi_K^2 =1 \text{ and }    \int_{-\infty}^{\infty}|\Phi'_K|^2 =\frac{c_0}{K^2}.
\end{equation}
To simplify notations we will
denote in the sequel,
$$
\phi_j(y_j)=\Phi_K(y_j)\,,\quad j=2,\ldots,m\,.
$$
Define the function
\begin{equation*}
\label{sdf}
q_\ell(Y,\xi) =
\begin{cases}
v_K^\nu(y_1-f_\ell(y'), \ \xi) \Prod_{j=2}^m\phi_j(y_j)  & \text{
  on }   \ \Omega_{\ell, K},\\
0 &\text{ on }   \  \Omega_\ell \setminus \Omega_{\ell, K},
\end{cases}
\end{equation*}
 with $v_K^\nu$ given by \eqref{eq:6} (for $\ell=K$). Above we used the fact that
 $$
 \supp \Big(v_K^\nu(y_1-f_\ell(y'), \ \xi)
 \Prod_{j=2}^m\phi_j(y_j)\Big)\subset{\overline\Omega_{\ell, K}}\,,
 $$
 since
 \begin{equation*}
   \supp\big(\Prod_{j=2}^m\phi_j(y_j)\big)\subset
   \underbrace{(-\frac{K}{\sqrt{m-1}}, \frac{K}{\sqrt{m-1}})\times\cdots\times
     (-\frac{K}{\sqrt{m-1}}, \frac{K}{\sqrt{m-1}})}_\text{$m-1$
     times}\subset {\widetilde B}_K(0)\,,
 \end{equation*}
 thanks to the properties of $\Phi_K$.
Next we introduce another change of variables in order to  flatten the
part of the boundary where a Neumann condition is imposed. For that
matter we let $Z=(z_1,z')$ with
$$
z'=y'\text{ and } 
z_1 = y_1 -f_\ell (y')
,$$
and then set $z=(Z,\xi)$, for $Y=(y_1,y')\in \Omega_{\ell, K}$ .
The determinant of the Jacobian matrix  for the above change of
variables is $1$.
The change of variables from $x$ to $z$ transforms $\Omega_{\ell, K}$
to the following domain, which is independent of $\ell$:
\begin{equation}
  \label{eq:111}
  \widetilde{\Omega}_{K} := \mathcal{B}_{K}({\bf e}_1)= (-K,0)\times
  {\widetilde B}_K(0)\times \omega_2\quad(\text{see }\eqref{kk2}).
\end{equation}

Set
\begin{equation}
  \label{eq:13}
 g_K(z)=g_K(Z,\xi)= v_K^\nu(z_1, \ \xi)
 \Prod_{j=2}^m\phi_j(z_j) ~\text{ on }\widetilde{\Omega}_{K}\,.
\end{equation}

An easy  computation  gives
\begin{equation}
  \label{eq:1}
  \grad_{Y}q_\ell = \grad_{Z}  g_K +( \del_{z_1}g_K )
  B_\ell~\text { on }\Omega_{\ell, K}. 
\end{equation}
where
\begin{equation}
  \label{eq:37}
  B_\ell = -(0,
\del_{y_2}f_\ell,  \ldots ,
\del_{y_{m}}f_\ell )\,.
\end{equation}
By \eqref{eq:6} and \eqref{0} we get
\begin{equation}
  \label{eq:14}
  \int_{\Omega_{\ell}} q_\ell^2 =\int_{\Omega_{\ell,K}} q_\ell^2 =\int_{\widetilde{\Omega}_{K}} g_K^2\,dz=
\Big(\int_{(-K, 0)\times \omega_2} \!\!\!\!\!|v_K^\nu|^2\,dz_1d\xi\Big)
\Big(\int_{{\widetilde B}_K(0)}\prod_{j=2}^m \phi^2_j(z_j)\,dz'\Big)=1\,.
\end{equation}
Moreover, from \eqref{eq:1} we deduce that 
\begin{equation}\label{nam}
\int_{\Omega_{\ell}} (A^B\grad_y q_\ell )\cdot \grad_y q_\ell =
\int_{\Omega_{\ell, K}} (A^B\grad_y q_\ell )\cdot \grad_y q_\ell =
\int_{{\widetilde\Omega}_K} (A^B\grad_z
g_K)\cdot \grad_z g_K +I_1,  
\end{equation}
where $I_1$ satisfies
\begin{equation}
  \label{eq:9}
|I_1|\le C\|B_\ell\|_\infty \int_{{\widetilde\Omega}_K} |\grad g_K |^2 =
C\|B_\ell\|_\infty \int_{{\widetilde\Omega}_K}
\bigg|\grad \Big(v_K^\nu\prod_{j=2}^m
    \phi_j\Big)\bigg|^2\,dz \le C\|B_\ell\|_\infty =o({K/\ell})\,,
\end{equation}
by \eqref{dsada}. 
 \par Next we estimate  the integral $\int_{{\widetilde\Omega}_K} (A^B\grad_z
g_K)\cdot \grad_z g_K$ on the R.H.S.\,of \eqref{nam}. 
\begin{multline}
  \label{eq:21}
\int_{{\widetilde\Omega}_K} (A^B\grad_z
g_K)\cdot \grad_z g_K  =  \int_{{\widetilde\Omega}_K} (BA_{11}B^T\grad_Z g_K )\cdot \grad_Z g_K  \\+ 2 \int_{{\widetilde\Omega}_K} (BA_{12}\grad_\xi g_K) \cdot \grad_Z g_K  + \int_{{\widetilde\Omega}_K} (A_{22}\grad_\xi g_K) \cdot \grad_\xi g_K :=  J_1+ 2 J_2 +J_3.
\end{multline}
Let us first calculate the term $J_1 $. 
\begin{equation}
  \label{eq:18}
\begin{aligned}
J_1 &= \int_{{\widetilde\Omega}_K}\big(\nu^TA_{11}\nu\big)
|\partial_{z_1}v_K^\nu|^2
\Big(\prod_{i=2}^m\phi_i^2\Big) \\
&+2\sum_{k=2}^m\int_{{\widetilde\Omega}_K} \big( \nu^TA_{11} p_k\big)v_K^\nu
(\partial_{z_1}v_K^\nu)\phi_k\phi'_k\Big( \prod_{\substack{i=2\\ i\neq
    k}}^m\phi_i^2 \Big)+ \sum_{k=2}^m \int_{{\widetilde\Omega}_K}\big(p_k^TA_{11}
p_k\big) |v_K^\nu|^2 |\phi'_k|^2
\Big(\prod_{\substack{i=2\\\ i\neq k}}^m\phi_i^2\Big)\\ &+
\sum_{\substack{2\le k, j\le m\\ k\ne j}}^m \int_{{\widetilde\Omega}_K} \big(p_k^TA_{11}p_j\big)|v_K^\nu|^2 \phi'_k \phi_k \phi'_j\phi_j \Big(\prod_{\substack{i=2\\ i\neq k, j}}^m\phi_i^2\Big).
\end{aligned}
\end{equation}
Notice that the second and last terms on the R.H.S.~of
\eqref{eq:18}  vanish since 
$$
\int_{-K}^K\phi_j(t) \phi'_j(t)\,dt= 0,~\forall j\ge
2\,.
$$
Using \eqref{eq:6} and \eqref{0} in \eqref{eq:18} yields
\begin{equation}
\label{ww}
J_1 \leq  \int_{(-K,0) \times \omega_2}(\nu^TA_{11}\nu) |\del_{z_1}v_K^\nu |^2\,dz_1d\xi+\frac{C}{K^2}\,.
\end{equation}
Clearly
\begin{equation}
\label{j3}
J_3 =\int_{(-K, 0) \times \omega_2} (A_{22}\grad_\xi v_K^\nu)\cdot \grad_\xi v_K^\nu  .
\end{equation}
Finally we turn to $J_2$. 
\begin{equation}
\label{j2}
\begin{aligned}
J_2 &=  \int_{{\widetilde\Omega}_K} \big((A_{12}\grad_{\xi}g_K) \cdot \nu\big) \del_{z_1}g_K  + \sum_{k=2}^m\int_{{\widetilde\Omega}_K} \big((A_{12}\grad_{\xi}g_K)\cdot p_k\big) \del_{z_k}g_K \\
& =
\int_{(-K,0)\times\omega_2}\big((A_{12}\grad_{\xi}v_K^\nu)\cdot
\nu\big) \partial_{z_1}v_K^\nu +  \sum_{k=2}^m\int_{{\widetilde\Omega}_K}
\big((A_{12}\grad_{\xi}v_K^\nu)\cdot p_k\Big)
v_K^\nu \phi_k \phi'_k \Big(\prod_{i=2, i \neq k}^m \phi_i^2\Big)\\
& =\int_{(-K,0)\times\omega_2}\big((A_{12}\grad_{\xi}v_K^\nu)\cdot
\nu\big) \partial_{z_1}v_K^\nu\,.
\end{aligned}
\end{equation}
From \eqref{eq:21}, \eqref{ww}--\eqref{j2} and \eqref{eq:6} we
deduce that
\begin{equation}
  \label{eq:20}
  \begin{aligned}
  \int_{{\widetilde\Omega}_K} (A^B\grad_zg_K)\cdot \grad_z g_K&\le
\int_{(-K, 0) \times
  \omega_2}(\nu^TA_{11}\nu)|\del_{z_1}v_K^\nu |^2+2 \int_{(-K, 0) \times
  \omega_2} \big((A_{12}\grad_{\xi}v_K^\nu) \cdot\nu
\big)\del_{z_1}v_K^\nu \\
&+\int_{(-K, 0) \times \omega_2}(A_{22}\grad_\xi
v_K^\nu)\cdot \grad_\xi v_K^\nu +\frac{C}{K^2}\\
&= \int_{(-K, 0) \times \omega_2} (A_\nu\nabla v_K^\nu)\cdot\nabla v_K^\nu +\frac{C}{K^2}=Z_K^\nu +\frac{C}{K^2}\,.
\end{aligned}
\end{equation}
Combining \eqref{eq:19} and \eqref{eq:po} with \eqref{eq:20} and 
\eqref{eq:14}--\eqref{eq:9} we obtain
\begin{equation}
  \label{eq:22}
  \lambda_\ell\leq \int_{\Omega_{\ell}} (A^B\grad_y q_\ell )\cdot
    \grad_y q_\ell \le Z_K^\nu +\frac{C}{K^2}
  +o({K/\ell})
\,.
\end{equation}
For any sequence
satisfying $\lim_{j\to\infty}\ell_j=\infty$, we choose
$K=\ell_j^\beta$ and pass to the limit in \eqref{eq:22} to deduce,
using \eqref{xc1}, that
\begin{equation}
  \label{eq:23}
  \limsup_{\ell\rightarrow \infty}\lambda_\ell\le \lim_{\ell\to\infty}
  Z_{\ell^\beta}^\nu =Z^\nu\,.
\end{equation}
 The result follows from \eqref{eq:23} since the direction $\nu$ can be chosen arbitrarily.
\end{proof}

\section{The Lower bound}\label{sec44}

The following  theorem is  the  main  result  of this section:
 \begin{theorem}\label{lb}
 \emph{[Lower bound]} Assume that $A_{12} \grad_{\xi} W \not\equiv 0  \ a.e.  \ \textrm{in}  \ \omega_2$, then
 \begin{equation}\label{bn}
 \liminf_{\ell \rightarrow \infty} \lam_\ell \geq \inf_{\nu \in S^{m-1}}Z^{\nu}.
 \end{equation}  
\end{theorem}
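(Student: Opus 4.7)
The plan is an IMS-type localization of the Rayleigh quotient \eqref{eq:19} for the first eigenfunction $u_\ell$, normalized by $\int_{\Omega_\ell}u_\ell^2=1$. Fix $K>0$ large. First I would construct a quadratic partition of unity $1=\chi_0^2+\sum_{j=1}^{N_\ell}\chi_j^2$ on $\overline{\Omega_\ell}$ with $\|\nabla\chi_j\|_\infty=O(1/K)$ and bounded overlap, such that each $\chi_j$ with $j\ge 1$ is supported in some boundary patch $\Omega_{\ell,K}^{P_j}$ (with $P_j\in\partial\omega_1$ and outward unit normal $\nu_j\in S^{m-1}$), while $\chi_0$ is supported in the interior set $\{X\in\ell\omega_1:\mathrm{dist}(X,\partial(\ell\omega_1))>K/2\}\times\omega_2$. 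The standard IMS identity, combined with $\sum_j\chi_j^2\equiv 1$ (which kills the cross term $\sum_j 2\chi_j u_\ell (A\nabla u_\ell)\cdot\nabla\chi_j$), yields
\begin{equation*}
\lambda_\ell=\sum_j\int_{\Omega_\ell}(A\nabla(\chi_j u_\ell))\cdot\nabla(\chi_j u_\ell)-\sum_j\int_{\Omega_\ell}u_\ell^2(A\nabla\chi_j)\cdot\nabla\chi_j,
\end{equation*}
where the second sum is $O(K^{-2})$ by \eqref{mod}.

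Since $\chi_0$ vanishes near $\Gamma_\ell$ and $u_\ell=0$ on $\gamma_\ell$, we have $\chi_0 u_\ell\in H^1_0(\Omega_\ell)$, so \rth{chipot} gives $\int(A\nabla(\chi_0 u_\ell))\cdot\nabla(\chi_0 u_\ell)\ge\mu_1\|\chi_0 u_\ell\|_2^2$. For $j\ge 1$, by construction $\chi_j u_\ell\in V(\Omega_{\ell,K}^{P_j})$, so its Rayleigh quotient is at least $s_{\ell,K}^{P_j}$. Combining these bounds with $\sum_j\|\chi_j u_\ell\|_2^2=1$ and $Z^\nu\le\mu_1$ from \eqref{xc1}, one obtains
\begin{equation*}
\lambda_\ell\ge\min\Bigl\{\mu_1,\,\inf_{j\ge 1}s_{\ell,K}^{P_j}\Bigr\}-\frac{C}{K^2}.
\end{equation*}

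Next I would pass to the limit $\ell\to\infty$ at fixed $K$. Using the boundary-flattening change of variables $z_1=y_1-f_\ell(y')$, $z'=y'$ from the proof of \rth{upperbound}, $\Omega_{\ell,K}^{P_j}$ is mapped onto $\widetilde{\Omega}_K=\mathcal{B}_K(\mathbf{e}_1)$, and the rotated matrix $A^B$ differs from the matrix defining $s_K^{\nu_j}$ on $\mathcal{B}_K(\nu_j)$ by an $L^\infty$ perturbation of order $\|\nabla f_\ell\|_\infty=o(1)$, by \eqref{dsada}. Hence $s_{\ell,K}^{P_j}=s_K^{\nu_j}+o(1)$ uniformly in $j$, since \eqref{dsada} is uniform over $\partial\omega_1$. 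By continuity of $\nu\mapsto s_K^\nu$ (the argument of \rlemma{lem:Z-nu} applies verbatim) and compactness of $S^{m-1}$, $\liminf_{\ell\to\infty}\inf_{j\ge 1}s_{\ell,K}^{P_j}\ge\inf_{\nu\in S^{m-1}}s_K^\nu$. Domain monotonicity gives $s_K^\nu\downarrow\widetilde{Z}^\nu$ as $K\to\infty$, and by Dini's theorem on $S^{m-1}$ the convergence is uniform, so $\inf_\nu s_K^\nu\to\inf_\nu\widetilde{Z}^\nu$. A separation-of-variables argument (analogous to the calculation in \rth{upperbound}, with test functions of the form $v(y_1,\xi)\prod_{k\ge 2}\Phi_R(y_k)$ as $R\to\infty$) establishes $\widetilde{Z}^\nu=Z^\nu$. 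Choosing $K=K_\ell\to\infty$ slowly enough, one concludes $\liminf_\ell\lambda_\ell\ge\inf_\nu Z^\nu$.

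\textbf{Main obstacles.} The delicate points are: (i) arranging the partition of unity on the large curved boundary $\Gamma_\ell$ with $O(1)$ overlap across the $N_\ell\to\infty$ patches while keeping $\|\nabla\chi_j\|_\infty=O(1/K)$, for which the uniformity of \eqref{dsada} over all parameterizations of $\partial\omega_1$ is essential; and (ii) the identification $\widetilde{Z}^\nu=Z^\nu$, whose upper bound is a mild variation on \rth{upperbound}, but whose reverse inequality requires taming the coupling between $y_1$, $y'$ and $\xi$ in $A^B$, most naturally via Fourier decomposition in $y'$ and the orthogonality $\int\Phi_K\Phi'_K=0$ already exploited in \rth{upperbound}. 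The hypothesis $A_{12}\nabla_\xi W\not\equiv 0$ is not used by the above argument to establish the stated inequality; its role is to guarantee $\inf_\nu Z^\nu<\mu_1$, which makes the bound non-trivial and supports the strict-inequality assertion of \rth{main}.
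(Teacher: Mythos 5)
Your IMS localization argument is a genuinely different route from the paper's. The paper argues by \emph{contradiction}: assuming $\liminf\lambda_\ell<\min_\nu Z^\nu$, it derives exponential decay of $u_\ell$ both in a boundary layer (Lemma~\ref{nm}, by iteratively testing the PDE against cutoff functions and comparing with the patch eigenvalues $s_{\ell,K}^{P_0}$) and in the interior (Lemma~\ref{ef}, by comparing with $\sigma_\ell^1$), and then contradicts the normalization $\int_{\Omega_\ell}u_\ell^2=1$. Your direct argument is shorter and avoids the iteration machinery entirely; in particular the error from the partition of unity is carried as an explicit $O(K^{-2})$ surplus rather than via exponential decay estimates, and your outer (interior) patch only needs the soft bound $\sigma_\ell^1\ge\mu_1$ from Theorem~\ref{chipot} together with \eqref{xc1}, not a strict spectral gap. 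A consequence you note correctly: your route does not use $A_{12}\nabla_\xi W\not\equiv 0$ at all, whereas the paper must use it to guarantee the gaps $\sigma_\ell^1-\lambda_\ell>\mu>0$ and $s_{\ell,K}-\lambda_\ell>\mu^0>0$ that drive its iterations. Both arguments ultimately feed on the same auxiliary facts --- $|s_{\ell,K}^{P_j}-s_K^{\nu_j}|=o(K/\ell)$ uniformly in $P_j$ (Lemma~\ref{conver}), the monotone limit $s_K^\nu\searrow\widetilde Z^\nu$ (Lemma~\ref{lem:Ktoi}), and the identification $\widetilde Z^\nu=Z^\nu$ --- but the overall mechanism (direct IMS lower bound vs.\ decay-plus-contradiction) is different, and yours is cleaner where it closes.

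There is, however, a real soft spot. Your chain stops at $\liminf_\ell\lambda_\ell\ge\min\{\mu_1,\inf_\nu\widetilde Z^\nu\}=\inf_\nu\widetilde Z^\nu$ (the last equality since $\widetilde Z^\nu\le Z^\nu\le\mu_1$), and you need $\inf_\nu\widetilde Z^\nu\ge\inf_\nu Z^\nu$. You dispose of this by asserting $\widetilde Z^\nu=Z^\nu$ for all $\nu$, but your proposed proof of the key direction $\widetilde Z^\nu\ge Z^\nu$ --- ``Fourier decomposition in $y'$ and the orthogonality $\int\Phi_K\Phi_K'=0$'' --- would not succeed: that orthogonality is a feature of the specific product test functions used in the \emph{upper} bound of Theorem~\ref{upperbound}, and gives no control over a general competitor for $\widetilde Z^\nu$, where genuine $(y_1,y',\xi)$ coupling in $A^B$ survives. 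The paper's tool for the reverse inequality is the Picone identity (Lemma~\ref{l5}), which however is only stated and proved under the assumption $Z^\nu<\mu_1$ (needed so that $Z^\nu$ is attained). To make either proof airtight one must also cover the directions $\nu$ with $Z^\nu=\mu_1$; this can be done by running Picone with the comparison function $W(\xi)$ itself on $\mathcal{B}_\infty(\nu)$ and observing that the resulting boundary integral over $\{X\cdot\nu=0\}\times\omega_2$ involves $\nu^TA_{12}\nabla_\xi W$, which vanishes exactly when $Z^\nu=\mu_1$, giving $\widetilde Z^\nu\ge\mu_1=Z^\nu$. The paper's passage from \eqref{eq:15} to \eqref{eq:39} glides over this same point, so you are in good company, but you should replace the Fourier sketch by a Picone argument and handle the $Z^\nu=\mu_1$ case explicitly.
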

\begin{remark}
  Thanks to \rlemma{lem:Z-nu} actually $\inf_{\nu \in S^{m-1}}Z^{\nu}=\min_{\nu \in S^{m-1}}Z^{\nu}$.
\end{remark}
The proof of \rth{lb} requires several preliminary results. We start with the next Lemma.
\begin{lemma}\label{l5}
Let $\nu \in S^{m-1}$ be such that  the strict inequality $Z^\nu <
\mu_1$ holds. Then, $Z^\nu = \widetilde{Z}^\nu$, where
$\widetilde{Z}^\nu$ is defined in \eqref{t_i}.
\end{lemma}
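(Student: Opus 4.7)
The plan is to establish the two inequalities $\widetilde Z^\nu\le Z^\nu$ and $\widetilde Z^\nu\ge Z^\nu$ separately.

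The inequality $\widetilde Z^\nu\le Z^\nu$ does not require the hypothesis $Z^\nu<\mu_1$ and follows by adapting the test-function construction from the proof of Theorem~\ref{upperbound}. Rotate the $X$-coordinates so that $\nu={\bf e}_1$, and, with $v_K^\nu$ the eigenfunction from \eqref{eq:6} and $\Phi_K$ the $L^2$-normalized bump of \eqref{0}, set
\[
u_K(x_1,x',\xi)=v_K^\nu(x_1,\xi)\prod_{j=2}^m\Phi_K(x_j).
\]
Then $u_K\in V(\mathcal B_\infty(\nu))$ because its support lies in $(-K,0)\times\widetilde B_K(0)\times\omega_2$, and $\int u_K^2=1$. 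Expanding $(A\nabla u_K)\cdot\nabla u_K$ as in \eqref{eq:18}--\eqref{eq:20} (with the rotated matrix $A^B$ playing the role of $A$), every cross term vanishes by $\int\Phi_K\Phi_K'=0$, the pieces aligned with $A_\nu$ reassemble into $(A_\nu\nabla v_K^\nu)\cdot\nabla v_K^\nu$, and the genuine $x'$-derivative contributions are at most $C/K^2$. Hence $\widetilde Z^\nu\le Z_K^\nu+C/K^2$, and letting $K\to\infty$ and using \eqref{xc1} yields $\widetilde Z^\nu\le Z^\nu$.

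For the reverse inequality $\widetilde Z^\nu\ge Z^\nu$ I would work in the same rotated coordinates and take any $u\in V(\mathcal B_\infty(\nu))$ with $\int u^2=1$. By Fubini, the slice $(x_1,\xi)\mapsto u(x_1,x',\xi)$ belongs to $V((-\infty,0)\times\omega_2)$ for a.e.\,$x'\in\R^{m-1}$; applying definition \eqref{xc} slice by slice and integrating in $x'$ gives
\[
\int_{\mathcal B_\infty(\nu)}(A_\nu\nabla_{x_1,\xi}u)\cdot\nabla_{x_1,\xi}u\;\ge\;Z^\nu.
\]
It remains to control the residual $R(u)=\int_{\mathcal B_\infty(\nu)}(A\nabla u)\cdot\nabla u-\int_{\mathcal B_\infty(\nu)}(A_\nu\nabla_{x_1,\xi}u)\cdot\nabla_{x_1,\xi}u$, which collects exactly the contributions involving $\nabla_{x'}u$: a positive quadratic block in $\nabla_{x'}u$ together with cross terms coupling $\nabla_{x'}u$ to $\partial_{x_1}u$ and to $\nabla_\xi u$.

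The main obstacle is that $R(u)$ need not be pointwise non-negative: a Schur-complement computation shows that a na\"ive pointwise bound yields an effective matrix strictly smaller than $A_\nu$, which would correspond to an eigenvalue below $Z^\nu$. My plan is therefore to argue via a minimizing sequence rather than pointwise. Pick $u_n\in V(\mathcal B_\infty(\nu))$, $\int u_n^2=1$, with Rayleigh quotient converging to $\widetilde Z^\nu$. Using translation invariance in $x'$, together with smearing by convolution against an $L$-scale bump in $x'$ and letting $L\to\infty$ along an appropriate diagonal, one produces a modified sequence $\tilde u_n$ whose Rayleigh quotient still tends to $\widetilde Z^\nu$ but for which $\|\nabla_{x'}\tilde u_n\|_{L^2}\to 0$; Cauchy--Schwarz then bounds the cross terms in $R(\tilde u_n)$ by $\|\nabla_{x'}\tilde u_n\|_{L^2}\cdot O(1)\to 0$, leaving $\liminf R(\tilde u_n)\ge 0$. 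The hypothesis $Z^\nu<\mu_1$ enters at exactly this smearing step: it rules out the alternative in which the minimizing sequence concentrates in $\xi$ onto the cross-section eigenfunction $W$, which would obstruct the smearing and drive the Rayleigh quotient up to $\mu_1$. Combining the slice bound with $\liminf R(\tilde u_n)\ge 0$ yields $\widetilde Z^\nu\ge Z^\nu$, completing the proof.
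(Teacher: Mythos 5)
Your first half (proving $\widetilde Z^\nu\le Z^\nu$ by tensoring $v_K^\nu$ with the bumps $\Phi_K$ and expanding the Rayleigh quotient) is correct and is essentially what the paper does; that inequality indeed needs no hypothesis on $Z^\nu$.

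The second half has a genuine gap, and in fact the construction you propose cannot work. You want a minimizing sequence $\tilde u_n\in V(\mathcal{B}_\infty(\nu))$ with $\int\tilde u_n^2=1$, Rayleigh quotient $\to\widetilde Z^\nu$ \emph{and} $\|\nabla_{x'}\tilde u_n\|_{L^2}\to0$. Observe that along any such sequence the residual $R(\tilde u_n)$ you defined tends to $0$ (every term in $R$ carries at least one factor of $\nabla_{x'}\tilde u_n$, and the gradients are $L^2$-bounded), so $\int(A_\nu\nabla_{x_1,\xi}\tilde u_n)\cdot\nabla_{x_1,\xi}\tilde u_n\to\widetilde Z^\nu$; combined with the elementary identity $\inf_{u}\int(A_\nu\nabla_{x_1,\xi}u)\cdot\nabla_{x_1,\xi}u\big/\int u^2=Z^\nu$ (tensor a near-minimizer of \eqref{xc} with any normalized $\phi(x')$, and conversely slice by Fubini), the existence of such a sequence is \emph{equivalent} to $\widetilde Z^\nu=Z^\nu$. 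So your ``smearing'' step is not an auxiliary simplification — it is the whole lemma, and must be justified from scratch. Moreover the smearing itself does not produce it: convolving $u$ with an $x'$-bump at scale $L$ makes $\|\nabla_{x'}(u*\varphi_L)\|_{L^2}=O(L^{-1})$ but also sends $\|u*\varphi_L\|_{L^2}\to0$, and after renormalization the error term in the Rayleigh quotient scales like $L^{m-2}$ — it does not vanish for $m\ge3$. Dilating in $x'$ keeps $\int u^2$ fixed and kills $\nabla_{x'}$, but then the Rayleigh quotient converges to $\int(A_\nu\nabla_{x_1,\xi}u)\cdot\nabla_{x_1,\xi}u\big/\int u^2\ge Z^\nu$, which is not $\widetilde Z^\nu$ unless you already know $\widetilde Z^\nu\ge Z^\nu$. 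Your remark about the hypothesis $Z^\nu<\mu_1$ ``ruling out concentration on $W$'' is also not substantiated by the argument.

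The paper's route uses the hypothesis quite differently: since $Z^\nu<\mu_1$, a concentration-compactness result from \cite{pr} (their Prop.~6.1) guarantees that $Z^\nu$ is \emph{attained} by a positive minimizer $v^\nu$ on $(-\infty,0)\times\omega_2$ solving the Euler--Lagrange system \eqref{eq:12}. Extend $v^\nu$ trivially in $x'$ to get $\tilde v^\nu>0$ on $\mathcal{B}_\infty(\nu)$ solving $-\div(A\nabla\tilde v^\nu)=Z^\nu\tilde v^\nu$. Then for any admissible $u$, the Picone identity \eqref{eq:55} plus integration by parts (the Neumann condition on $\{x_1=0\}$ kills the boundary term) gives $\int(A\nabla u)\cdot\nabla u\ge Z^\nu\int u^2$, hence $\widetilde Z^\nu\ge Z^\nu$. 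This is the step you are missing: the hypothesis supplies a genuine positive supersolution to compare with, which closes the argument without any need to control $\nabla_{x'}$ along a minimizing sequence.
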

\begin{proof}
 Without loss of generality we assume again that $\nu={\bf e}_1$.
 It  is easy  to show  that
 \begin{equation}\label{eq:v}
 \widetilde{Z}^{\nu} \leq Z_\ell^{\nu} ,\quad\forall\ell>0.
\end{equation}
Indeed, to prove \eqref{eq:v} it suffices to use  in \eqref{t_i}  a test function of the
form
\begin{equation*}
  w_{\ell}(X,\xi)=\begin{cases}
     \Big(\Prod_{j=2}^m\Phi_K(x_j)\Big)v_\ell^{\nu}(x_1,\xi) &
     x_1\in(-\ell,0)\\
     0 & \text{ otherwise}
   \end{cases}
   ,
\end{equation*}
for any $K>0$, with $\Phi_K$ as defined in the previous section and
$v_\ell^\nu$ as given by \eqref{eq:6}. A similar computation to the
one used in the
proof of \rth{upperbound} gives
\begin{equation}
  \label{eq:10}
   \widetilde{Z}^{\nu}\le Z_\ell^{\nu} +\frac{C}{K^2}\,.
 \end{equation}
Actually the computation here is even simpler since we do not need to
 flatten the boundary. Letting $K$ go to infinity in \eqref{eq:10} yields
 \eqref{eq:v}. Finally, passing to the limit $\ell\to\infty$ in
 \eqref{eq:v}, taking into account \eqref{xc1}, 
 we deduce the inequality
 \begin{equation}
   \label{eq:11}
   \widetilde{Z}^{\nu}\le Z^{\nu}\,.   
 \end{equation}

For  the reverse inequality, we assume as above w.l.o.g.~that
$~\nu={\bf e}_1$ and  notice that our assumption $Z^\nu
< \mu_1$ implies, thanks to \cite[Prop.~6.1]{pr}, that
$Z^{\nu}$ is attained by some function $v^{\nu}$ that satisfies the
Euler-Lagrange equation
\begin{equation}
  \label{eq:12}
  \left\{
    \begin{aligned}
  -\div(A_\nu\nabla v^{\nu})&=Z^{\nu} v^{\nu}&\text{ on }(-\infty, 0)\times
  \omega_2,\\
  v^{\nu}&=0 &\text{ on }(-\infty, 0)\times
   \partial\omega_2,\\
  (A_\nu\nabla v^{\nu})\cdot{\bf e}_1&=0 &\text{ on }\{0\}\times\omega_2.
\end{aligned}
\right.
\end{equation}
By elliptic regularity and the strong maximum principle we
know that $v^{\nu}$ is continuous and can be assumed to be positive in $(-\infty, 0)\times
\omega_2$. We extend $v^{\nu}$ naturally  to
$\mathcal{B}_{\infty}(\nu)$ by setting
\begin{equation*}
  \tilde v^{\nu}(x_1,x',\xi)=v^{\nu}(x_1,\xi)\,,~x_1\in(-\infty,0),\,x'\in\R^{m-1},\,\xi\in\omega_2.
\end{equation*}
It is easy to verify that \eqref{eq:12} implies that $\tilde v^{\nu}$
satisfies
\begin{equation*}
  -\div(A\nabla \tilde v^{\nu})=Z^{\nu} \tilde v^{\nu}\text{ on }\mathcal{B}_{\infty}(\nu).
\end{equation*}
Next, we recall the  following version of  the Picone  identity,
\begin{equation}
  \label{eq:55}
   (A\nabla u)\cdot \nabla u-(A\nabla
  v)\cdot \nabla\big(\frac{u^2}{v}\big)  =  A\big(\nabla u-\frac{u}{v}\nabla
  v\big)\cdot \big(\nabla u-\frac{u}{v}\nabla v\big)  \geq 0\,.
\end{equation}

 Take any  $u\in V(\mathcal{B}_{\infty}(\nu))$ (that we may assume to
 be smooth). By
\eqref{eq:55}, integration by parts  and \eqref{eq:12} we obtain
 \begin{equation*}
   \label{eq:64}
\begin{aligned}
   0&\leq\int_{\mathcal{B}_{\infty}(\nu)}   A\big(\nabla
   u-\frac{u}{\tilde v^{\nu}}\nabla
  \tilde v^{\nu}\big) \cdot \big(\nabla u-\frac{u}{\tilde
    v^{\nu}}\nabla \tilde v^{\nu}\big)   \\&=
\int_{\mathcal{B}_{\infty}(\nu)} (A\nabla u) \cdot\nabla u  - (A\nabla
   \tilde v^{\nu}) \cdot \nabla\big(\frac{u^2}{\tilde v^{\nu}}\big)\\&=
 \int_{\mathcal{B}_{\infty}(\nu)} (A\nabla u)\cdot\nabla u
+\int_{\mathcal{B}_{\infty}(\nu)}
 \textrm{div}(A\nabla \tilde v^{\nu})\big(\frac{u^2}{\tilde v^{\nu}}\big)-\int_{\{0\}\times \R^{m-1}\times \omega_2}
\big( (A\nabla \tilde v^{\nu})\cdot\nu_1\big) \big(\frac{u^2}{\tilde v^{\nu}}\big)\\
&=\int_{\mathcal{B}_{\infty}(\nu)}(A\nabla u)\cdot \nabla u
- Z^{\nu} u^2,
\end{aligned}
\end{equation*}
where $\nu_1= (1, 0,  \ldots , 0) \in \R^{m+p-1}$. Since the above
holds for an arbitrary $u\in V(\mathcal{B}_{\infty}(\nu))$, we deduce
the desired  inequality $\widetilde{Z}^{\nu}\ge Z^{\nu}$.
\end{proof}

\begin{lemma}
  \label{lem:Ktoi}
  For each $\nu\in{S}^{m-1}$ we have
  \begin{equation}
    \label{eq:25}
  s_K^\nu  \searrow \widetilde{Z}^\nu\text{ as }K\nearrow\infty.
  \end{equation}
\end{lemma}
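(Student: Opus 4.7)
The plan is to establish the monotonicity of $K\mapsto s_K^\nu$ together with a quasi-minimizer cutoff argument, which are the two standard ingredients for monotone domain-exhaustion of a Rayleigh quotient infimum. The sets $\{\mathcal{B}_K(\nu)\}_{K>0}$ form an increasing family exhausting $\mathcal{B}_\infty(\nu)$, and the free (Neumann) part of the boundary is the common face $\{X\cdot\nu=0\}\times\omega_2$ for every $K$, so both ingredients adapt cleanly to the mixed boundary situation.

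First I would verify the monotonicity: if $K_1<K_2$, then any $u\in V(\mathcal{B}_{K_1}(\nu))$ can be extended by zero to $\mathcal{B}_{K_2}(\nu)$. The extension lies in $H^1(\mathcal{B}_{K_2}(\nu))$ because $u$ vanishes on the Dirichlet part of $\partial\mathcal{B}_{K_1}(\nu)$ (the lateral sides and the face $\{X\cdot\nu=-K_1\}$), which is precisely the portion of $\partial\mathcal{B}_{K_1}(\nu)$ lying in the interior or on the Dirichlet part of $\overline{\mathcal{B}_{K_2}(\nu)}$; the common free face $\{X\cdot\nu=0\}\times\omega_2$ is shared, so no trace condition is violated there. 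This produces a competitor in $V(\mathcal{B}_{K_2}(\nu))$ with the same Rayleigh quotient, so $s_{K_2}^\nu\le s_{K_1}^\nu$. The identical extension-by-zero argument (letting $K_2\to\infty$) shows $\widetilde{Z}^\nu\le s_K^\nu$ for every $K$, hence the decreasing sequence $s_K^\nu$ has a limit $L\ge\widetilde{Z}^\nu$.

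Next I would prove the reverse inequality $L\le\widetilde Z^\nu$ by cutting off a quasi-minimizer. Fix $\eps>0$ and pick $u\in V(\mathcal{B}_\infty(\nu))$ (which after normalization we may assume smooth and $L^2$-normalized) with
\[
 \int_{\mathcal{B}_\infty(\nu)}(A\nabla u)\cdot\nabla u\le \widetilde Z^\nu+\eps .
\]
Assuming w.l.o.g.~$\nu={\bf e}_1$, introduce a smooth cutoff $\chi_K(x_1,x')=\eta(-x_1/K)\,\psi(|x'|/K)$, where $\eta,\psi$ are smooth, take values in $[0,1]$, equal $1$ on $[0,1/2]$ and vanish on $[1,\infty)$, and satisfy $|\eta'|,|\psi'|\le 4$. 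Then $u_K:=\chi_K u$ belongs to $V(\mathcal{B}_K(\nu))$: it lies in $H^1$, it inherits the vanishing of $u$ on the lateral face $(-\infty,0)\times\widetilde B_\infty\times\partial\omega_2$, it vanishes on the lateral boundary of $\mathcal{B}_K(\nu)$ (because $\chi_K$ does), and it is unconstrained on the free face $\{x_1=0\}\times\widetilde B_K(0)\times\omega_2$ (because $\chi_K(0,x')$ need not vanish there — crucially $\eta(0)=1$). Writing $\nabla u_K=\chi_K\nabla u+u\,\nabla\chi_K$ and using $|\nabla\chi_K|\le C/K$, dominated convergence gives
\[
\int_{\mathcal{B}_K(\nu)}u_K^2\longrightarrow \int_{\mathcal{B}_\infty(\nu)}u^2,\qquad \int_{\mathcal{B}_K(\nu)}|\chi_K\nabla u|^2\longrightarrow\int_{\mathcal{B}_\infty(\nu)}|\nabla u|^2,
\]
while $\int|u\,\nabla\chi_K|^2\le (C/K^2)\int_{\mathcal{B}_\infty(\nu)}u^2\to 0$. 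Combined with the uniform boundedness of $A$ from \eqref{mod} and a Cauchy–Schwarz bound on the cross term, this yields $\int(A\nabla u_K)\cdot\nabla u_K\to\int(A\nabla u)\cdot\nabla u$. Therefore
\[
 \limsup_{K\to\infty} s_K^\nu\ \le\ \lim_{K\to\infty}\frac{\int(A\nabla u_K)\cdot\nabla u_K}{\int u_K^2}\ \le\ \widetilde Z^\nu+\eps,
\]
and letting $\eps\downarrow 0$ finishes the proof.

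The only delicate point—and the step I would write out most carefully—is checking the admissibility of the extension-by-zero in the monotonicity step and of $u_K$ in the cutoff step, because of the mixed boundary condition: one must verify that the traces on the various Dirichlet pieces vanish while no spurious constraint is created on the free face $\{X\cdot\nu=0\}\times\omega_2$. Everything else reduces to a routine dominated convergence computation that is uniform in $A$ thanks to \eqref{ellip}–\eqref{mod}.
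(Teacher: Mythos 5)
Your proof is correct and follows essentially the same route as the paper's: monotonicity via extension by zero, then identifying the limit as $\widetilde Z^\nu$ by a density argument. The paper simply asserts that $\bigcup_{K>0} V(\mathcal{B}_K(\nu))$ is dense in $V(\mathcal{B}_\infty(\nu))$, whereas you make that claim explicit by constructing the cutoff $\chi_K$ and computing the Rayleigh quotients directly.
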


\begin{proof}
  First we notice that for any $K_1<K_2$ we have $s_{K_2}^\nu\le
  s_{K_1}^\nu$. This follows from the fact that every $u\in
  V(\mathcal{B}_{K_1}(\nu))$ can be extended to a function $\tilde u\in
  V(\mathcal{B}_{K_2}(\nu))$ by setting $\tilde u=0$ on
  $\mathcal{B}_{K_2}(\nu)\setminus \mathcal{B}_{K_1}(\nu)$. This
  implies that the function $K\mapsto  s_K^\nu$ is non-increasing,
  whence the limit $\lim_{K\to\infty} s_K^\nu$ exists. To identify the
  value of the limit as  $\widetilde{Z}^\nu$ it suffices to note that
  $\bigcup_{K>0} V({\mathcal B}_K(\nu))$ is dense in $V({\mathcal B}_\infty(\nu))$.
\end{proof}

\begin{lemma}\label{conver}
 For any
  point $P_0\in\partial\omega_1$ with normal vector $\nu$  we have
  \begin{equation}
    \label{eq:31}
    |s_{\ell,K}^{P_0}-s_K^\nu|=o(K/\ell)\,,
  \end{equation}
   for all $K\in(0,\ell)$ and the estimate holds uniformly for all
   points $P_0\in\partial\omega_1$.
\end{lemma}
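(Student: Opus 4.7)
My plan is to relate the two variational problems via the rotation-and-flattening change of variables already used in the proof of \rth{upperbound}. First, by an orthogonal change of coordinates in $X$-space (combined with a translation) I would reduce to the case $P_0=0$ and $\nu={\bf e}_1$; this replaces $A$ by the matrix $A^B$ defined as in \rth{upperbound}, which inherits the ellipticity constant $c_A$ and the norm bound $C_A$. Under this transformation $\mathcal{B}_K(\nu)$ becomes $\widetilde{\Omega}_K=\mathcal{B}_K({\bf e}_1)$, while $\Omega^{P_0}_{\ell,K}$ becomes the curved region with Neumann face $\{y_1=f_\ell(y')\}$. I would then apply the boundary-flattening map $z_1=y_1-f_\ell(y')$, $z'=y'$, $\xi=\xi$, whose Jacobian is $1$; it sends $\Omega^{P_0}_{\ell,K}$ diffeomorphically onto $\widetilde{\Omega}_K$ and sends the Neumann face to $\{z_1=0\}\times\widetilde{B}_K(0)\times\omega_2$, yielding a bijection $u\leftrightarrow v$ between $V(\Omega^{P_0}_{\ell,K})$ and $V(\widetilde{\Omega}_K)$ that preserves $L^2$ norms.

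Following the calculation in \eqref{eq:1}--\eqref{eq:9}, the gradient transforms as $\grad_y u=\grad_z v+(\partial_{z_1}v)B_\ell$ with $B_\ell=-(0,\partial_{z_2}f_\ell,\ldots,\partial_{z_m}f_\ell)$, so that
\begin{equation*}
\int_{\Omega^{P_0}_{\ell,K}}(A^B\grad_y u)\cdot \grad_y u=\int_{\widetilde{\Omega}_K}(A^B\grad_z v)\cdot \grad_z v+E_\ell,
\end{equation*}
with $|E_\ell|\le C\|B_\ell\|_\infty \int_{\widetilde{\Omega}_K}|\grad_z v|^2$ by \eqref{mod}. By the uniform ellipticity of $A^B$, $\int|\grad v|^2\le c_A^{-1}\int (A^B\grad v)\cdot\grad v$, so the additive error becomes multiplicative; dividing by $\int u^2=\int v^2$ and taking the infimum over the bijective pair gives
$$
|s_{\ell,K}^{P_0}-s_K^\nu|\le C_1\|B_\ell\|_\infty\, s_K^\nu,\qquad C_1=C/c_A,
$$
valid for $\ell$ large enough that $C_1\|B_\ell\|_\infty\le 1/2$.

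To conclude, I would invoke \eqref{dsada} in the form $\|B_\ell\|_\infty=o(K/\ell)$, which holds uniformly in $P_0$ thanks to the uniform $C^1$-parameterization of the compact boundary $\partial\omega_1$ explained around \eqref{bet}--\eqref{dsada}. It then remains to obtain a uniform bound $s_K^\nu\le M$ for $K\ge 1$ and $\nu\in S^{m-1}$, which I would get by testing the Rayleigh quotient in \eqref{eq:s_K} (in rotated coordinates, so against $A^B$) against the single product $\phi(z_1,z',\xi)=\chi(z_1)\eta(z')W(\xi)$, where $\chi\in C^\infty(\R)$ vanishes for $z_1\le -1$, $\eta\in C_c^\infty(\widetilde{B}_1(0))$, and $W$ is as in \eqref{eq:54}. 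For every $K>1$ and every $\nu$, such $\phi$ lies in $V(\widetilde{\Omega}_K)$, and its Rayleigh quotient is bounded in terms of $C_A$ and the fixed functions $\chi,\eta,W$ alone. The one point that needs care is the uniformity in $P_0\in\partial\omega_1$ of the perturbation estimate, but this is secured by the uniform modulus of continuity of $\grad f^{P_0}$ at the origin noted in Section~\ref{33}.
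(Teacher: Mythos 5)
Your proof is correct and follows essentially the same route as the paper's: reduce to $\nu={\bf e}_1$, apply the boundary-flattening change of variables $z_1=x_1-f_\ell(x')$ to pass between the two Rayleigh quotients, and absorb the resulting error term (controlled by $\|B_\ell\|_\infty$) using the estimate $\|B_\ell\|_\infty=o(K/\ell)$ from \eqref{dsada}. The extra care you take — converting the additive error to a multiplicative one via ellipticity and establishing a $\nu$-uniform bound $s_K^\nu\le M$ for $K\ge 1$ by testing against a fixed product $\chi(z_1)\eta(z')W(\xi)$ — is a detail the paper leaves implicit in the step $s_{\ell,K}^{P_0}\le(1+o(K/\ell))s_K^\nu\le s_K^\nu+o(K/\ell)$.
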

\begin{proof}
  As above we may assume that $\nu={\bf e}_1$ and then
  $\Omega_{K,\ell}^{P_0}$ is given by \eqref{kk1'}. Consider any $v\in
  V(\Omega_{K,\ell}^{P_0} )$ satisfying
  \begin{equation}
    \label{eq:32}
    \int_{\Omega_{K,\ell}^{P_0}} |v|^2=1\,.
  \end{equation}
We change variables by
\begin{equation*}
\left\{
\begin{aligned}
&z_1 = x_1 -f_\ell (x')\,, \\
&z_i = x_i \hspace{3mm} \textrm{for} \ i = 2,  \ldots ,   m\,,
\end{aligned}
\right.
\end{equation*}
and define  $w(z)=w(Z, \xi)= v(X,\xi) $ where $Z= (z_1, z_2, \ldots ,
z_m)$ and $(z_{m+1},\ldots,z_{m+p})=(\xi_1,\ldots,\xi_p)$. Then $w \in V(\mathcal{B}_{K}(\nu))$ and by the same
computations as in the proof of \rth{upperbound}  it  is easy to check
that
\begin{equation}
  \label{eq:38}
  \int_{\mathcal{B}_{K}(\nu)}|w|^2\,dz= \int_{\Omega_{K,\ell}^{P_0}} |v|^2\,dx=1,
\end{equation}
and
\begin{equation}
  \label{eq:36}
  \int_{\Omega_{K,\ell}^{P_0}}(A\nabla_xv)\cdot\nabla_xv\,dx=\int_{\mathcal{B}_{K}(\nu)}(A\nabla_zw)\cdot\nabla_zw\,dz+I_1\,,
\end{equation}
where $I_1$ satisfies
\begin{equation}
  \label{eq:33}
  |I_1|\le C\|B_\ell\|_\infty
  \int_{\mathcal{B}_{K}(\nu)}|\nabla_zw|^2\le o(K/\ell) \int_{\mathcal{B}_{K}(\nu)}|\nabla_zw|^2\,,
\end{equation}
 with $B_\ell$ as in \eqref{eq:37}.
By  \eqref{eq:38} and \eqref{eq:36}--\eqref{eq:33} we deduce that
\begin{equation*}
  s_{\ell,K}^{P_0}\leq (1+o(K/\ell)) s_K^\nu\leq s_K^\nu+o(K/\ell))\,.
\end{equation*}
The reverse inequality is proved similarly and we are led to \eqref{eq:31}.
\end{proof}

Next we obtain some decay estimates for $u_\ell$ away from the
boundary (these are analogous to the estimates in \cite[Thm~5.1]{pr}).

\begin{lemma}\emph{[Asymptotic of first eigenfunction]}
\label{ef}
Assume that \eqref{co} holds and let $r \in  (0, \ell -1]$. Then,
there exist  constants $\alpha \in (0,1)$ and  $C> 0$ and  for
sufficiently  large $\ell,$  we have
\begin{equation}
  \label{eq:24}
 \int_{\Omega_r}| u_\ell|^2 \leq \alpha^{[\ell -r]} \text{ and
 }\int_{\Omega_r}| \grad  u_\ell |^2  \leq  C\alpha^{[\ell -r]}.
 \end{equation}
\end{lemma}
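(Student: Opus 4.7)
The strategy is a localization argument that yields a geometric recursion for $a_r:=\int_{\Omega_r}u_\ell^2$. Fix a cutoff $\eta_r=\eta_r(X)\in C^1_c(\R^m)$ that equals $1$ on $r\omega_1$, is supported in $(r+1)\omega_1$, and satisfies $\|\nabla_X\eta_r\|_\infty\le C_0$ uniformly in $r$ (available from the $C^1$--regularity of $\partial\omega_1$ and the scaling). Setting $v:=\eta_r u_\ell$, we have $v\in H^1_0(\Omega_{r+1})$ since $u_\ell$ vanishes on $\gamma_\ell$ while $\eta_r$ is supported in $(r+1)\omega_1$. Testing the weak form of \eqref{eq:5} against $\eta_r^2 u_\ell$ produces the localized energy identity
\begin{equation*}
\int_{\Omega_{r+1}}(A\nabla v)\cdot\nabla v=\lambda_\ell\int v^2+\int u_\ell^2\,(A\nabla\eta_r)\cdot\nabla\eta_r.
\end{equation*}

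Two ingredients complete the recursion. First, the Dirichlet--Poincar\'e inequality $\int(A\nabla v)\cdot\nabla v\ge\mu_1\int v^2$ for $v\in H^1_0(\Omega_{r+1})$, which is just \rth{chipot} applied to the cylinder $\Omega_{r+1}$ combined with the Rayleigh characterization of $\sigma^1_{r+1}$ (cf.~\eqref{eq:26}). Second, a uniform gap $\mu_1-\lambda_\ell\ge\delta>0$ for large $\ell$: by \eqref{co} there exists $\nu_0\in S^{m-1}$ with $\nu_0^T A_{12}\nabla_\xi W\not\equiv 0$ (otherwise every directional component would vanish), and applying the $m=1$ gap conclusion of \rth{crs} to the reduced one--dimensional problem whose coefficient matrix is $A_{\nu_0}$ yields $\min\{Z^{\nu_0},Z^{-\nu_0}\}<\mu_1$. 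Combined with \rlemma{lem:Z-nu} and \rth{upperbound} this supplies the uniform gap.

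Feeding both ingredients into the identity, with $\|\nabla\eta_r\|_\infty\le C_0$, \eqref{mod} to control the remainder, and $\eta_r\equiv 1$ on $\Omega_r$ to bound $\int v^2\ge a_r$, I obtain
\begin{equation*}
(\mu_1-\lambda_\ell)\,a_r\le C_0^2\,C_A\,(a_{r+1}-a_r),
\end{equation*}
which rearranges to $a_r\le\alpha\,a_{r+1}$ with $\alpha:=C_0^2 C_A/(C_0^2 C_A+\delta)\in(0,1)$. Iterating $[\ell-r]$ times against $a_\ell=1$ delivers the first estimate in \eqref{eq:24}. The gradient bound is then a routine Caccioppoli step: apply ellipticity \eqref{ellip} to $\int\eta_r^2(A\nabla u_\ell)\cdot\nabla u_\ell$ in the tested equation and absorb the cross term $2\int\eta_r u_\ell (A\nabla u_\ell)\cdot\nabla\eta_r$ by Cauchy--Schwarz; this gives $\int_{\Omega_r}|\nabla u_\ell|^2\le C(\lambda_\ell+1)\,a_{r+1}$, whereupon the $L^2$--decay already obtained propagates to the gradient with an enlarged constant.

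The main obstacle is securing the two inputs rather than the iteration. The Dirichlet--Poincar\'e bound is essentially \rth{chipot}, while the quantitative gap hinges on the implication \eqref{co}~$\Rightarrow$~$\inf_{\nu\in S^{m-1}}Z^\nu<\mu_1$, which reduces to the one--dimensional gap phenomenon of \cite{pr} once a direction $\nu_0$ is selected along which $\nu_0^T A_{12}\nabla_\xi W$ does not vanish identically.
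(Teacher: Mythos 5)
Your argument is correct and follows essentially the same route as the paper's: test the equation against a cutoff-squared multiple of $u_\ell$ to get a Caccioppoli identity, lower-bound the localized energy by a Dirichlet Rayleigh quotient (the paper uses $\sigma_\ell^1$ with $\rho^K u_\ell\in H^1_0(\Omega_\ell)$, you use $\sigma_{r+1}^1\ge\mu_1$ — both reduce to Theorem~\ref{chipot}), invoke the uniform gap $\mu_1-\lambda_\ell\ge\delta$ coming from \eqref{co} via the $m=1$ gap phenomenon, and iterate the resulting contraction. Your self-contained Caccioppoli derivation of the gradient bound replaces the paper's citation of \cite[Thm~5.2]{pr} but implements the same idea.
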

\begin{proof}
  Let $K$ be positive integer such that $K+1 < \ell$ and
 let $\rho^K =\rho^K(X)$ 
 be a Lipschitz  continuous  function on $\R^m$  such that
 $\rho^K = 1$ on $B(0,K)$ and  $\rho^K = 0$
 outside  $B(0,K+1)$.  We may also  assume that $|\grad\rho^K | \leq C$ for  some  positive  constant $C>0$.
 Testing the  Euler-Lagrange  equation \eqref{eq:5}
 satisfied by $u_\ell$ (with $\sigma=\lambda_\ell$) with $|\rho^K|^2u_\ell$ , we get
\begin{equation*}
\int_{\Omega_\ell}  A\nabla u_\ell\cdot \nabla ( |\rho^K|^2u_\ell)= \lambda_\ell \int_{\Omega_\ell}|\rho^K|^2u_\ell^2\,,
\end{equation*}
i.e.,
\begin{equation} \label{lla}
 \int_{\Omega_\ell} \big(A \nabla (|\rho^{K}|u_\ell)\big)\cdot\nabla (|\rho^K|u_\ell) -
\int_{\Omega_\ell}u_\ell^2 (A\nabla |\rho^K|)\cdot\nabla | \rho^K| = \lambda_\ell \int_{\Omega_\ell}|\rho^K|^2u_\ell^2\,.
\end{equation}
Since $|\rho^K|^2 u_\ell \in H_0^1(\Omega_\ell) $, we get from
\eqref{eq:26} that
\begin{equation}\label{diria}
\sigma_\ell^1  \int_{\Omega_\ell}u_\ell^2 |\rho^K|^2 \leq
\int_{\Omega_\ell} A\nabla(\rho^K u_\ell)\cdot \nabla (\rho^K  u_\ell) \,.
\end{equation}
Combining \eqref{lla}--\eqref{diria}, we get
\begin{equation*}\label{zero1a}
\begin{aligned}
(\sigma_\ell^1  -\lambda_\ell ) \int_{\Omega_{K+1}} u_\ell^2 |\rho^K|^2 \leq
\int_{\Omega_{K+1}}u_\ell^2 (A\nabla \rho^K)\cdot \nabla \rho^K &= \int_{
  \Omega_{K+1} \setminus \Omega_K }u_\ell^2 (A\nabla
\rho^{K})\cdot \grad \rho^{K}\\
&\leq C_A  \int_{  \Omega_{K+1} \setminus \Omega_K }u_\ell^2\,.
\end{aligned}
\end{equation*}
By \eqref{co} there exists a  direction  $\nu_1 \in S^{m-1}$ such that
$A_{12} \grad_\xi W \cdot \nu_1 \neq 0$. By Theorem \ref{crs}
it follows that
$$
\inf_{\nu\in S^{m-1}}Z^\nu\le \min\{Z^{\nu_1},Z^{-\nu_1}\} < \mu_1
\,.
$$
Combining this  with Theorem \ref{chipot}  and Theorem
\ref{upperbound}, we obtain that for sufficiently large $\ell$ we have
$\sigma_\ell^1 - \lam_\ell > \mu >0$, for some constant $\mu$.
Therefore we have
$$\int_{\Omega_K}u_\ell^2 \leq \frac{C_A}{\mu + C_A}\int_{\Omega_{K+1}}u_\ell^2.$$
Iterating this  formula from $K = r, r+1, \ldots ,[\ell]$ we get
the first inequality in \eqref{eq:24} with $\alpha:=\frac{C_A}{\mu +
  C_A}$.
Finally, the second estimate in \eqref{eq:24} follows from the first
one by a similar argument to the one used in the proof of \cite[Thm 5.2]{pr}.
\end{proof}
\\[2mm]

The next Lemma provides the last ingredient needed to the proof of
\rth{lb}. The main assumption \eqref{contr} below is used in the proof by
contradiction of that Theorem.
\begin{lemma}\label{nm}
 Assume that
\begin{equation} \label{contr}
 \liminf_{\ell \rightarrow \infty} \lambda_\ell =\lim_{k \rightarrow \infty} \
\lambda_{\ell_k}< \min_{\nu \in S^{m-1}} \widetilde{Z}^\nu.
 \end{equation}
Let $\beta\in(0,1)$.  Then 
\begin{equation}\label{po}
\lim_{k\to\infty} \int_{ \Omega_{\ell_k} \setminus \Omega_{\ell_k -\ell_k^\beta}} u_
{\ell_k}^2 =0.
\end{equation}
\end{lemma}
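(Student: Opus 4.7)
The plan is to argue by contradiction, localizing $u_{\ell_k}$ to the boundary layer and comparing its Rayleigh quotient with the quantities $s_{\ell,K}^{P_0}$ of \rsec{33}. Set $Z^\ast:=\min_{\nu\in S^{m-1}}\widetilde Z^\nu$; by \eqref{contr}, $2\delta_0:=Z^\ast-\lim_{k\to\infty}\lambda_{\ell_k}>0$. Fix an auxiliary exponent $\beta'\in(\beta,1)$ and let $\chi_\ell$ be a Lipschitz cutoff on $\ell\omega_1$ with $\chi_\ell\equiv 1$ on $\ell\omega_1\setminus(\ell-\ell^\beta)\omega_1$, $\chi_\ell\equiv 0$ on $(\ell-\ell^{\beta'})\omega_1$, and $\|\nabla\chi_\ell\|_\infty\le C/\ell^{\beta'}$; view $\chi_\ell$ as a function on $\Omega_\ell$ depending only on $X$. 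Then $\chi_\ell u_\ell\in V(\Omega_\ell)$ vanishes additionally on $\partial\bigl((\ell-\ell^{\beta'})\omega_1\bigr)\times\omega_2$. The goal is to produce an upper bound $\le\lambda_\ell+o(1)$ and a lower bound $\ge Z^\ast-o(1)$ for the Rayleigh quotient of $\chi_\ell u_\ell$; the positive gap $2\delta_0$ will then force $\int(\chi_\ell u_\ell)^2\to 0$ along $\ell=\ell_k$, which yields \eqref{po} since $\chi_{\ell_k}\equiv 1$ on $\Omega_{\ell_k}\setminus\Omega_{\ell_k-\ell_k^\beta}$.

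The upper bound follows by testing \eqref{eq:5} against $\chi_\ell^2 u_\ell\in V(\Omega_\ell)$ and applying the Caccioppoli-type identity $A\nabla(\chi_\ell u_\ell)\cdot\nabla(\chi_\ell u_\ell)=A\nabla u_\ell\cdot\nabla(\chi_\ell^2 u_\ell)+u_\ell^2\,A\nabla\chi_\ell\cdot\nabla\chi_\ell$ used in the proof of \rlemma{ef}; with \eqref{mod} and $\int u_\ell^2=1$ this gives
\[
\int_{\Omega_\ell}\bigl(A\nabla(\chi_\ell u_\ell)\bigr)\cdot\nabla(\chi_\ell u_\ell)\le\lambda_\ell\int_{\Omega_\ell}(\chi_\ell u_\ell)^2+\frac{C}{\ell^{2\beta'}}.
\]
For the matching lower bound I would construct a $C^1$ partition of unity $\{\phi_i\}_{i=1}^{N_\ell}$ on a neighborhood of $\partial(\ell\omega_1)$ containing $\supp\chi_\ell$, with $\sum_i\phi_i^2\equiv 1$, $\|\nabla\phi_i\|_\infty\le C/\ell^{\beta'}$, uniformly bounded overlap, and such that each $\supp\phi_i\times\omega_2$ is (after the rotation and translation of \rsec{33}) contained in a patch $\Omega_{\ell,K_\ell}^{P_i}$ with $K_\ell=c\ell^{\beta'}$ for some fixed $c>1$ and $P_i\in\partial\omega_1$ with outward normal $\nu_i$. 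Such a partition exists for large $\ell$ because $\partial\omega_1$ is $C^1$ and $1\ll\ell^{\beta'}\ll\ell$, so each patch fits in a single graph chart and the normal varies by $o(1)$ across it. Each $\phi_i\chi_\ell u_\ell$ then lies in $V(\Omega_{\ell,K_\ell}^{P_i})$ (it vanishes on $\gamma_\ell$ via $u_\ell$, on the inner face via $\chi_\ell$ since $c>1$, and on the lateral $x'$-faces via $\phi_i$), so
\[
\int\bigl(A\nabla(\phi_i\chi_\ell u_\ell)\bigr)\cdot\nabla(\phi_i\chi_\ell u_\ell)\ge s_{\ell,K_\ell}^{P_i}\int(\phi_i\chi_\ell u_\ell)^2,
\]
and \rlemma{conver} (uniformly in $P_i$) together with \rlemma{lem:Ktoi} yield $s_{\ell,K_\ell}^{P_i}\ge s_{K_\ell}^{\nu_i}-o(1)\ge\widetilde Z^{\nu_i}-o(1)\ge Z^\ast-o(1)$. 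Summing via the IMS-type identity $\sum_i A\nabla(\phi_i v)\cdot\nabla(\phi_i v)=A\nabla v\cdot\nabla v+v^2\sum_i A\nabla\phi_i\cdot\nabla\phi_i$ (whose cross term vanishes because $\sum\phi_i^2\equiv 1$ forces $\sum\phi_i\nabla\phi_i=0$), applied to $v=\chi_\ell u_\ell$, and bounding the pointwise sum $\sum_i A\nabla\phi_i\cdot\nabla\phi_i=O(\ell^{-2\beta'})$ gives
\[
\int_{\Omega_\ell}\bigl(A\nabla(\chi_\ell u_\ell)\bigr)\cdot\nabla(\chi_\ell u_\ell)\ge(Z^\ast-o(1))\int_{\Omega_\ell}(\chi_\ell u_\ell)^2-\frac{C}{\ell^{2\beta'}}.
\]

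Subtracting the two displayed estimates yields $(Z^\ast-\lambda_\ell-o(1))\int(\chi_\ell u_\ell)^2\le C/\ell^{2\beta'}$. Along $\{\ell_k\}$, the left-hand prefactor exceeds $\delta_0$ for $k$ large, whence $\int(\chi_{\ell_k}u_{\ell_k})^2=O(\ell_k^{-2\beta'})\to 0$, and \eqref{po} follows. The main technical obstacle I expect is the construction of the partition of unity at the intermediate scale $\ell^{\beta'}$ meeting all the geometric constraints simultaneously (supports rectifying into the patches $\Omega_{\ell,K_\ell}^{P_i}$, uniform gradient bound $C/\ell^{\beta'}$, uniformly bounded overlap), together with ensuring the $o(K/\ell)$ estimate of \rlemma{conver} is uniform over the $\sim\ell^{(1-\beta')(m-1)}$ reference points $P_i$; the $C^1$ regularity of $\partial\omega_1$ and the fact that the unscaled patch diameter $\ell^{\beta'-1}\to 0$ (so the normal varies by $o(1)$ across each patch) is exactly what makes this uniformity achievable.
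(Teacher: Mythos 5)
Your proof is correct, but it takes a genuinely different route from the paper's. The paper's proof fixes a boundary point $P_0$, tests the Euler--Lagrange equation with a telescoping family of cutoffs $\rho_K$ at depths $K-1,K$ for $K\in[\ell^\beta,2\ell^\beta]$, and from $s_{\ell,K}^{P_0}-\lambda_\ell>\mu^0>0$ (via \rlemma{lem:Ktoi} and \rlemma{conver}, exactly the two lemmas you also invoke) derives a contraction $\int_{\Omega_{\ell,K-1}}u_\ell^2\le\alpha\int_{\Omega_{\ell,K}}u_\ell^2$; iterating $[\ell^\beta]$ times yields exponential smallness $e^{[\ell^\beta]\log\alpha}$ on each patch, and the result follows by covering $\Omega_\ell\setminus\Omega_{\ell-\ell^\beta}$ with $N_\ell\lesssim\ell^{m-1-\beta}$ patches and summing. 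Your argument instead performs a single global localization: one outer cutoff $\chi_\ell$ at scale $\ell^{\beta'}$, a Caccioppoli identity for the upper bound, and an IMS partition-of-unity decomposition $\sum\phi_i^2\equiv1$ at the tangential scale $\ell^{\beta'}$ for the lower bound, producing polynomial decay $O(\ell^{-2\beta'})$ in one shot. Both use the same essential input (the spectral gap $s_{\ell,K}^{P_i}\ge Z^\ast-o(1)>\lambda_\ell+\delta_0$ from \rlemma{lem:Ktoi} and \rlemma{conver}, uniform in $P_i$) and both require controlling the combinatorics of $\sim\ell^{(m-1)(1-\beta')}$ boundary patches; the paper does this as a counting bound on a bare covering, you do it as a bounded-overlap partition with gradient control. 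The construction you flag as the main technical obstacle is indeed the analogue of the paper's covering step and is routine given $C^1$ regularity of $\partial\omega_1$ and $\ell^{\beta'}\ll\ell$: tile $\partial\omega_1$ at scale $\ell^{\beta'-1}$ inside finitely many graph charts, rescale by $\ell$, extend trivially in the local normal coordinate, and normalize the squares. Your approach yields weaker (polynomial rather than super-exponential) decay, which is nevertheless sufficient for \eqref{po}, and arguably packages the argument more cleanly; the paper's is more elementary at each step (a single cutoff, no partition of unity) at the cost of the iteration.
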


\begin{proof}
For the sake of simplicity  we shall denote in the sequel $\ell$
instead of $\ell_k$.
The main step of the proof consists of establish the following
estimate
\begin{equation}
  \label{eq:estimate}
  \int_{\Omega_{\ell,\ell^\beta}^{P_0}}u_\ell^2\le
  Ce^{[\ell^\beta]\log\alpha},\;\forall P_0\in\partial\omega_1,
\end{equation}
for some $\alpha\in(0,1)$.
We fix a point $P_0\in\partial\omega_1$, and we assume w.l.o.g (as in
Section\,\ref{33}) 
that the exterior normal at $P_0$ is $\nu={\bf e_1}$. In what follows
we shall use the shorthand notation 
 $\Omega_{\ell,K}=\Omega_{\ell,K}^{P_0}$.

 Let $K\in[\ell^\beta, 2\ell^\beta]$. Define the function $\rho_1^K =
 \rho_1^K(x')$  on ${\R}^{m-1}$ to be a Lipschitz  continuous  function satisfying
 $$
 0\le\rho_1^K\le1,\,  \rho_1^K = 1 \text{ on }{\widetilde B}_{K-1}(0)
 \text{ and }\rho_1^K =0  \text{ outside } {\widetilde B}_{K}(0).
 $$
  We also  assume that $|\grad_{x'}  \rho_1^K | \leq C$ for  some  positive  constant $C>0$. Let $s : \R \rightarrow \R$ be a Lipschitz continuous  function  such that
 \begin{equation*}
s(x_1)  =
\left\{
\begin{aligned}
&0 \hspace{7mm} \textrm{for} \ x_1 \leq 0,\\
&x_1 \hspace{5mm} \textrm{for} \ 0 \leq x_1 \leq 1,\\
&1 \hspace{7mm} \textrm{for} \ x_1 \geq 0
\end{aligned}
\right.
\end{equation*}
and
define  the function $g_\ell^K  =  u_\ell |\rho_{K}|^2\in V(\Omega_\ell) $  where  $\rho_{K}(x) = \rho_1^K s(x_1 - f_\ell(x) +K)$. Then  it is easy   to see   that
$\rho_K = 1$ on $\Omega_{\ell, K-1},  \ 0\leq \rho_K \leq 1$  and
$|\grad_X \rho_K| \leq C$.
\par Testing the  Euler-Lagrange  equation  satisfied by $u_\ell$ with $g_\ell^K$ 
we get

\begin{equation*}
\int_{\Omega_\ell}  A\nabla u_\ell\cdot \nabla g_\ell^K = \lambda_\ell \int_{\Omega_\ell}|\rho_{K}|^2u_\ell^2\,,
\end{equation*}
i.e.,
\begin{equation} \label{ll}
 \int_{\Omega_\ell} A \nabla (\rho_{K}u_\ell) \cdot\nabla (\rho_{K}u_\ell)  -
\int_{\Omega_\ell}u_\ell^2  A\nabla \rho_{K} \cdot\nabla  \rho_{K} = \lambda_\ell \int_{\Omega_\ell} |\rho_{K}|^2u_\ell^2\,.
\end{equation}
Since $g_\ell^K \in V( \Omega_{\ell, K} )$  we get from
\eqref{eq:s_K-Om} that 
\begin{equation}\label{diri}
s_{\ell,K}  \int_{\Omega_{\ell, K}}u_\ell^2 |\rho_{K}|^2 \leq
\int_{\Omega_{\ell, K}} A\nabla(\rho_{K} u_\ell)\cdot \nabla (\rho_{K}  u_\ell) \,.
\end{equation}
Combining \eqref{ll}--\eqref{diri}, we obtain, for some positive constant $C_A$,
\begin{equation*}\label{zero1}
\begin{aligned}
(s_{\ell,K} -\lambda_\ell ) \int_{\Omega_{\ell, K-1}} u_\ell^2 \leq  &(s_{\ell,K} -\lambda_\ell ) \int_{\Omega_{\ell, K}} u_\ell^2 |\rho_{K}|^2 \leq
\int_{\Omega_{\ell,K}}u_\ell^2 (A\nabla \rho_{K})\cdot \nabla \rho_{K}\\ &= \int_{
  \Omega_{\ell, K} \setminus \Omega_{\ell, K-1 }}u_\ell^2 (A\nabla
\rho_{K})\cdot \nabla \rho_{K}
\leq C_A  \int_{  \Omega_{\ell,K } \setminus \Omega_{\ell, K-1} }u_\ell^2\,.
\end{aligned}
\end{equation*}
 From \rlemma{lem:Ktoi}, \rlemma{conver} and \eqref{contr} we get that
 for  $\ell>\ell_0$ there holds
 \begin{equation}\label{next}s_{\ell,K} -\lambda_\ell > \mu^0>0, \ \textrm{ for  some constant } \mu^0.
 \end{equation}
Therefore we
deduce that
\begin{equation*}
(C_A+\mu^0)\int_{\Omega_{\ell, K-1}} u_\ell^2  \leq C_A \int_{\Omega_{\ell, K}}u_\ell^2\,,
\end{equation*}
i.e.,
\begin{equation}
  \label{iteration}
  \int_{\Omega_{\ell, K-1}} u_\ell^2\le \alpha  \int_{\Omega_{\ell, K}} u_\ell^2\,,
\end{equation}
with  $\alpha :=\frac{C_A}{C_A+\mu^0}<1$. 
 Applying \eqref{iteration} successively for $K= \ell^\beta +1, \ell^\beta +2, \ldots,  \ell^\beta + [\ell^\beta]$ yields
\begin{equation*}
\int_{\Omega_{\ell, \ell^\beta}} u_\ell^2 \leq e^{[\ell^\beta]\log\alpha} \int_{\Omega_\ell}u_\ell^2  = e^{[\ell^\beta ]\log\alpha},
\end{equation*}
and \eqref{eq:estimate} follows.
\par Finally we note that
the $m-$dimensional area of $\partial(\ell\omega_1)$ is
$\sim\ell^{m-1}$. Hence 
we may choose $N_\ell$ points $\{P_i\}_{i=1}^{N_\ell}\in\partial\omega_1$ with $N_\ell\le
C\ell^{m-1-\beta}$ such that
\begin{equation}
  \label{eq:17}
  \Omega_\ell\setminus\Omega_{\ell-\ell^\beta}\subset \bigcup_{i=1}^{N_\ell}\Omega_{\ell,\ell^\beta}^{P_i}\,.
\end{equation}
Combining \eqref{eq:estimate} with \eqref{eq:17} yields
\begin{equation*}
  \int_{ \Omega_\ell\setminus\Omega_{\ell-\ell^\beta}}u_\ell^2\le \sum_{i=1}^{N_\ell}\int_{\Omega_{\ell,\ell^\beta}^{P_i}}u_\ell^2\le C\ell^{m-1-\beta}e^{[\ell^\beta ]\log\alpha},
\end{equation*}
and \eqref{po} follows.
\end{proof}
\\[2mm]
Now we are ready to present the proof of \rth{lb}.\\[2mm]
\begin{proof}[Proof of \rth{lb}]
  Assume by contradiction that
  \begin{equation}
    \label{eq:15}
    \liminf_{\ell\to\infty}\lambda_\ell<\min_{\nu\in S^{m-1}}Z^\nu.
  \end{equation}
  The assumption  $A_{12}\nabla_\xi W\not\equiv0$ implies that
  there exists $\nu_0\in S^{m-1}$ for which $\nu_0^T A_{12}\nabla_\xi
  W\not\equiv0$. This implies, by combining \cite[Thm~4.2]{pr} with
   \cite[Thm~5.2]{pr} for the operator associated with the matrix
   $A_{\nu_0}$ (see \eqref{eq:4}) that we have
    \begin{equation*}
      \min\{Z^{\nu_0},Z^{-\nu_0}\}<\mu_1\,.
    \end{equation*}
    Applying \rlemma{l5} we deduce from \eqref{eq:15}  that also
    \begin{equation}
      \label{eq:39}
      \liminf_{\ell\to\infty}\lambda_\ell<\min_{\nu\in S^{m-1}}{\widetilde
        Z}^\nu. 
    \end{equation}
    By \rlemma{nm} we get for $k$ sufficiently large
    \begin{equation}
      \label{eq:27}
     \int_{ \Omega_{\ell_k} \setminus \Omega_{\ell_k -\ell_k^\beta}} u_
{\ell_k}^2 <\frac{1}{4}.
    \end{equation}
    But by \rlemma{ef} we also have,    for $k$ sufficiently large,
    \begin{equation}
      \label{eq:28}
      \int_{\Omega_{\ell_k -\ell_k^\beta}} u_
{\ell_k}^2 <\frac{1}{4}.
\end{equation}
Combining \eqref{eq:27} with \eqref{eq:28} we get for large $k$
\begin{equation*}
  1=\int_{\Omega_{\ell_k}}u_
{\ell_k}^2 <\frac{1}{2},
\end{equation*}
which is clearly a contradiction.
\end{proof}
\\[2mm]

\begin{proof}[Proof of Theorem \ref{main}]
  Assume first that \eqref{co} holds. In this case  it suffices to
  combine \rth{upperbound} with \rth{lb} to get the result. If
  \eqref{co} does not hold, i.e., we have
  \begin{equation}
    \label{eq:34}
     A_{12} \grad_{\xi} W  = 0  \ a.e. \ \text{ in }  \ \omega_2,
   \end{equation}
   Then it can be easily verified that $u(x)=W(\xi)$ (see
   \eqref{eq:54}) is a positive eigenfunction in \eqref{eq:5} with
   $\sigma=\mu_1$, whence $\lambda_\ell=\mu_1$ for all $\ell$.
 \end{proof}

\section{Limit of the  Higher Order Eigenvalues}
\label{sec:high}

This short section is devoted to the proof of \rth{high}. Recall the
Rayleigh quotient characterization of $\lambda_\ell^k$ for any $k\ge2$:
\begin{equation}
  \label{eq:40}
  \lambda_\ell^k=\inf\left\{\frac{\int_{\Omega_\ell}(A\nabla
    u)\nabla u}{\int_{\Omega_\ell}u^2}\,:\,0\ne u\in
  H^1(\Omega_\ell), u=0 \text{ on }\gamma_\ell,  \int_{\Omega_\ell}uu_\ell^i=0 \text{ for }1\le i\le k-1\right\},
\end{equation}
 where $u_\ell^i$ denotes an eigenfunction in \eqref{eq:5} corresponding to $\sigma=\lambda_\ell^i$.
Roughly
speaking, the case $m\ge2$ is easier than the case $m=1$ since we have
\enquote{more space} to carry out a construction of competitors for
the infimum in \eqref{eq:40}.\\[2mm]
\begin{proof}[Proof of \rth{high}]
  Since $\lambda_\ell^k \geq \lambda_\ell^1=\lambda_\ell$ for all $k$, 
  the lower bound $\liminf_{\ell \rightarrow \infty}\lambda_\ell^k \geq \min_{\nu \in S^{m-1}} Z^\nu$ follows from Theorem \ref{main}.
  It remains to prove the other inequality, namely, that for every
  $k\ge2$ there holds
  \begin{equation}
    \label{eq:35}
    \liminf_{\ell \rightarrow \infty}\lambda_\ell^k \le \min_{\nu \in S^{m-1}} Z^\nu\,.
  \end{equation}
Suppose that $k$ distincts $\{\nu_j\}_{j=1}^k\in S^{m-1}$  have been fixed. By
the proof of \rth{upperbound}  we can construct 
functions  $w_\ell^j \in V(\Omega_\ell)$, for $j=1 ,  \ldots  ,  k$, 
with 
\begin{equation}
\label{2nd}
 \int_{\Omega_\ell} |w_\ell^j|^2 = 1 \  \textrm{and}   \ \limsup_{\ell \rightarrow \infty} \int_{\Omega_\ell} A\grad w_\ell^j\cdot \grad w_\ell^j \leq Z^{\nu_j}.
\end{equation}
Basically, for each $j$, as $\ell\to\infty$    the functions  $w_\ell^j$ concentrate  near the point $\ell P_0^j \in \del\Omega_\ell$, where  $P_0^j \in \del\omega_1 \times \omega_2$ is such that the $\nu_j$ is the outward unit  normal at this point.

Next define $W_\ell = \sum_{j=1}^{k} \alpha_\ell^j w_\ell^j \in
V(\Omega_\ell)$  where $\{\alpha_\ell^j\}_{j=1}^k$ are to be chosen  appropriately.
In order to use $W_\ell$ as a competitor in \eqref{eq:40} we  need to
choose $\{\alpha_\ell^j\}_{j=1}^k$
so that the following equalities hold:
\begin{equation}
  \label{eq:44}
  \int_{\Omega_\ell} W_\ell u_\ell^j = 0,\quad  1\le j\le k-1.
\end{equation}
A nontrivial choice for $\{\alpha_\ell^j\}_{j=1}^k$  is possible
because  \eqref{eq:44} induces a linear  system of $k-1$
equations with  $k$ unknowns.

For any given $\varepsilon>0$ we may assume $\ell$ is large enough so
that
\begin{equation}
  \label{eq:41}
  \int_{\Omega_\ell} A\grad w_\ell^j\cdot \grad w_\ell^j\le
  Z^{\nu_j}+\varepsilon,\quad 1\le j\le k.
\end{equation}
Plugging the resulting $W_\ell$ in
\eqref{eq:40} yields
\begin{equation}\label{kh}\lambda_\ell^k \leq  \frac{\sum_{j=1}^{k}(
    \alpha_\ell^j)^2 Z^{ \nu_j} }{\sum_{j=1}^k( \alpha_\ell^j)^2 } +
  \varepsilon \leq \max_{1\le j\le k}\{ Z^{\nu_j} \} +\varepsilon.\end{equation}

Since  the map $\nu \mapsto Z^\nu$ is
continuous on $S^{m-1}$ by \rlemma{lem:Z-nu} we can now conclude using \eqref{kh}. Indeed,
for any $\nu\in S^{m-1}$ we can choose distinct
$\{\nu_j\}_{j=1}^k\subset S^{m-1}$ (all of them \enquote{close to $\nu$}) such that
\begin{equation}
  \label{eq:42}
  |Z^{\nu_j}-Z^{\nu}|<\varepsilon,\quad 1\le j\le k..
\end{equation}
Using this choice of $\{\nu_j\}_{j=1}^k$ in the above
construction  yields, combining \eqref{eq:42} and \eqref{kh}, 
\begin{equation*}
  \lambda_\ell^k\le Z^\nu+2\varepsilon.
\end{equation*}
 Since this can be done for any $\nu\in S^{m-1}$ the desired result
 \eqref{eq:35} follows.
\end{proof}


\begin{thebibliography}{200}
\small{

\bibitem{allaire} G. Allaire and A. Piatnitski, On the asymptotic
  behaviour of the kernel of an adjoint convection-diffusion operator
  in a long cylinder,  Rev. Mat. Iberoam. {\bf 33} (2017), 1123--1148. 
\bibitem{b}

M. Chipot, $\ell$ goes to plus infinity, Birkh\"auser, 2002.





\bibitem{delpino} M. Chipot, J. D\'avila, and M. del Pino, {\em On the behavior of positive solutions of semilinear elliptic equations in asymptotically cylindrical domains}, 
J. Fixed Point Theory Appl. {\bf 19} (2017), 205--213.

    \bibitem{mard} M. Chipot and S. Mardare, {\em Asymptotic behaviour
        of the Stokes problem in cylinders becoming unbounded in one
        direction}, J. Math. Pures Appl.} {\bf 90} (2008), 133--159.

\bibitem{alex}  M. Chipot, A. Mojsic and  P. Roy, {\em On some
    variational problems set on domains tending to infinity}, Discrete
  Contin. Dyn. Syst. {\bf 36} (2016),  3603--3621.


\bibitem{pr} M. Chipot, P. Roy and  I. Shafrir, Asymptotics of
  eigenstates of elliptic problems with mixed boundary data on domains
  tending to infinity,  Asymptotic Analysis, {\bf 85} (2013), 199--227.

\bibitem{dir}
M. Chipot and A. Rougirel, {\em On the asymptotic behavior of the
eigenmodes for elliptic problems in domain becoming unbounded},
Trans of AMS {\bf 360},  3579--3602, 2008.

\bibitem{indra}  I. Chowdhury and P. Roy, {\em  On the asymptotic
    analysis of problems involving fractional Laplacian in cylindrical
    domains tending to infinity}, Commun. Contemp. Math. {\bf 19}
  (2017), 21 pp.


  \bibitem{so} P. Donato,  S. Mardare and B. Vernescu, {\em From
      Stokes to Darcy in infinite cylinders: do limits commute?},
    Differential Integral Equations, {\bf 26} (2013), 949--974.


\bibitem{ers} L. Esposito, P.  Roy and F.  Sk, 
{\em On the asymptotic behavior of the eigenvalues of nonlinear elliptic problems in domains becoming unbounded}
Asymptot. Anal. {\bf 123} (2021), 79--94.




\bibitem{gt}
D. Gilbarg and N.S. Trudinger, Elliptic Partial Differential Equations
of second order, Springer-Verlag, 1983.




\bibitem{rrr} R. Rawat, H. Roy and  P. Roy,
{\em Nonlinear elliptic eigenvalue problems in cylindrical domains becoming unbounded in one direction},
Asymptot. Anal. {\bf 139} (2024), 245--277.

\bibitem{rooy} P. Roy, Some results in asymptotic analysis and
  nonlocal problems, \textit{Thesis}, University of Zürich, 2013.
  
\bibitem{Karen}
K. Yeressian, {\em Asymptotic behavior of elliptic nonlocal equations
  set in cylinders},  Asymp. Anal. {\bf 89} (2014), 21--35.














\end{thebibliography}
\end{document}